\documentclass{article}

\usepackage{multicol}
\usepackage{multirow}
\usepackage{fullpage}
\usepackage{amsthm}
\usepackage{latexsym}
\usepackage{amsmath,amscd}
\usepackage{amsfonts}
\usepackage{amssymb}
\usepackage[T1]{fontenc}
\usepackage{verbatim} 
\usepackage[all]{xy}
\usepackage[french,english]{babel}
\usepackage{blkarray}

\newcommand{\bP}{\mathbb{P}}
\newcommand{\bC}{\mathbb{C}}
\newcommand{\cO}{\mathcal{O}}

\newcommand{\Om}{\Omega}
\newcommand{\tOm}{\widetilde{\Omega}}

\DeclareMathOperator{\codim}{codim}

\DeclareMathOperator{\Gr}{Gr}

\theoremstyle{plain}
\newtheorem{theorem}{Theorem}[section]
\newtheorem{lemma}[theorem]{Lemma}
\newtheorem{e-proposition}[theorem]{Proposition}
\newtheorem{corollary}[theorem]{Corollary}

\newtheorem{theointro}{Theorem}

\theoremstyle{definition}
\newtheorem{e-definition}[theorem]{Definition\rm}
\theoremstyle{remark}
\newtheorem{remark}[theorem]{\it Remark\/}

\begin{document}

\title{Differential equations as embedding obstructions and vanishing theorems
}


\author{Damian Brotbek     
}

\maketitle

\begin{abstract}
We generalize a vanishing theorem for the cohomology of symmetric powers of the cotangent bundle of subvarieties of projective space due to Schneider. From this we deduce new vanishing results for Green-Griffiths jet differential bundles, generalizing results of Diverio and Pacienza-Rousseau.
\end{abstract}

\section{Introduction}
\label{intro}
Given a smooth projective $n$-dimensional variety $X$, one can search for the smallest integer $N$ such that there exists an embedding $X\hookrightarrow \bP^N$. Obviously, $n\leqslant N$, and, on the other hand, a simple argument shows that $N\leqslant 2n+1$ (see $\cite{Har74}$ \textsection  4). In general, these bounds are optimal, and therefore it seems natural to try to understand what geometric properties of $X$ yield obstructions  for projective embeddings. Schneider proved in $\cite{Sch92}$ that the existence of symmetric differential forms on $X$ imposes strong restriction on such embeddings. His result, and its generalization for higher order cohomology groups can be stated as follows.

\begin{theorem}[Schneider \cite{Sch92} Theorems 1.1 and 1.2]
Let $X \subset \bP^N$ be a  smooth subvariety of dimension $n$ and codimension $c$. Let $m\in \mathbb{N}$ and $a\in \mathbb{Z}$. Consider an integer $0\leqslant j<n-c$. If $a< m-\min\{j,1\}$, then
$$H^j(X,S^m\Om_X\otimes \cO_{X}(a))=0.$$
\end{theorem}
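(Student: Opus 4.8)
The plan is to establish the vanishing $H^j(X,S^m\Omega_X\otimes\cO_X(a))=0$ by relating the cotangent bundle of $X$ to the restriction of the twisted cotangent/Euler sequence data on $\bP^N$, and then to feed this into Le Potier–type vanishing combined with the Bott vanishing theorem on projective space. Concretely, I would first recall the conormal exact sequence
$$0\to N^\vee_{X/\bP^N}\to \Omega_{\bP^N}|_X\to \Omega_X\to 0,$$
where $N^\vee_{X/\bP^N}$ is the conormal bundle, a vector bundle of rank $c$. Taking symmetric powers does not preserve exactness, but it yields a finite filtration on $S^m(\Omega_{\bP^N}|_X)$ whose graded pieces are $S^{m-k}\Omega_X\otimes S^k N^\vee_{X/\bP^N}$. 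The strategy is to control $H^j(X,S^m\Omega_X\otimes\cO_X(a))$ by pushing the computation up to $S^m\Omega_{\bP^N}|_X$ (where Bott's formulas are explicit) while the error terms involving the conormal bundle are handled by a dimension/codimension count. The hypothesis $j<n-c$ is precisely what one expects to make the conormal contributions vanish in the relevant range.

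**Next I would** set up the cohomological bookkeeping. Twisting the filtration by $\cO_X(a)$ and taking the long exact sequences in cohomology, the vanishing of $H^j(X,S^m\Omega_X\otimes\cO_X(a))$ would follow from two types of input: (i) vanishing of $H^\bullet(X, S^m\Omega_{\bP^N}|_X\otimes\cO_X(a))$ in degrees near $j$, obtained by restricting $\bP^N$ to $X$ via the Koszul resolution of $\cO_X$ by $\cO_{\bP^N}$-modules and applying Bott vanishing for $S^m\Omega_{\bP^N}(a)$ on $\bP^N$; and (ii) control of the conormal correction terms $H^{j'}(X,S^{m-k}\Omega_X\otimes S^k N^\vee_{X/\bP^N}\otimes\cO_X(a))$ for $k\geq 1$. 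The arithmetic constraint $a<m-\min\{j,1\}$ is what guarantees the positivity/negativity thresholds in Bott's theorem fall on the correct side: for $j=0$ one needs $a<m$, and for $j\geq 1$ one needs the slightly stronger $a<m-1$, which matches the $\min\{j,1\}$ term exactly.

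**The cleanest route** is probably an induction on the codimension $c$. For $c=0$ (the case $X=\bP^N=\bP^n$) the statement is Bott vanishing: $H^j(\bP^n,S^m\Omega_{\bP^n}(a))=0$ whenever $0\le j<n$ and $a<m-\min\{j,1\}$, which is a direct transcription of the classical Bott formula for symmetric powers of the cotangent bundle twisted by a line bundle. For the inductive step I would slice $X$ by a general hyperplane section $Y=X\cap H$ (or, dually, realize $X$ of codimension $c$ inside a variety of codimension $c-1$) and use the adjunction/restriction exact sequence $0\to\Omega_X(-1)\to\Omega_X\to\Omega_Y\to 0$ together with its symmetric-power filtration, reducing the bound for $X$ with parameters $(n,c,j)$ to bounds for $Y$ with parameters $(n-1,c-1,\cdot)$, while keeping track of how $a$, $m$, and $j$ shift. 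The condition $j<n-c$ is stable under this slicing (it becomes $j<(n-1)-(c-1)$), which is the structural reason it is the right hypothesis.

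**The main obstacle** I anticipate is controlling the conormal/error terms in (ii) — equivalently, making the induction close without losing the sharp constant in $a<m-\min\{j,1\}$. Each application of a short exact sequence shifts the twist by $\cO_X(-1)$ and can shift the cohomological degree, so one must verify that every correction term $H^{j'}(X,S^{m-k}\Omega_X\otimes S^kN^\vee\otimes\cO_X(a))$ either vanishes for degree reasons (using $j<n-c$ so that $j'$ stays below the relevant dimension) or by the inductive hypothesis with strictly improved parameters. Getting the $\min\{j,1\}$ bound exactly right — rather than the cruder $a<m-j$ one might first derive — will require care at the boundary $j=1$, where the symmetric power splitting first interacts nontrivially with the twist; this is where I expect the delicate part of the argument to lie.
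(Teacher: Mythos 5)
Your overall plan (conormal sequence plus a positivity-type vanishing theorem plus careful bookkeeping of twists) points in the right direction, but two of its load-bearing steps fail at the level of generality of the statement. First, your input (i) --- computing $H^\bullet(X,S^m\Om_{\bP^N}|_X\otimes\cO_X(a))$ by ``restricting $\bP^N$ to $X$ via the Koszul resolution of $\cO_X$'' and Bott's formulas --- presupposes that $\cO_X$ admits a Koszul resolution by line bundles on $\bP^N$, i.e.\ that $X$ is a complete intersection. The theorem is stated for an arbitrary smooth subvariety. The way around this (which is the point of the bundle $\tOm$ in this paper) is to replace $\Om_{\bP^N}$ by $\tOm_{\bP^N}=V\otimes\cO_{\bP^N}(-1)$, whose restriction to any $X$ is a direct sum of copies of $\cO_X(-1)$; the tilde conormal sequence $0\to N^*_{X/\bP^N}\to\tOm_{\bP^N}|_X\to\tOm_X\to0$ then makes every ambient term computable with no hypothesis on $X$.

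Second, and more seriously, your error terms (ii) are the wrong ones and cannot be controlled. The symmetric-power filtration of $S^m(\Om_{\bP^N}|_X)$ with graded pieces $S^kN^\vee\otimes S^{m-k}\Om_X$ expresses the known object in terms of the unknown ones, and for $k\geqslant 1$ the pieces carry symmetric powers of the conormal bundle, which have unbounded rank and are very negative; no vanishing for them is available in the range $j<n-c$, and the codimension hypothesis never enters your argument. The device that actually makes the codimension count work is the Koszul-type long exact sequence $0\to\Lambda^mN^*\to\Lambda^{m-1}N^*\otimes B\to\cdots\to N^*\otimes S^{m-1}B\to S^mB\to S^m\tOm_X\to0$ (with $B=\tOm_{\bP^N}|_X$): there the correction terms carry exterior powers $\Lambda^kN^*$ of rank at most $c$, and after Serre duality one applies Le Potier's vanishing theorem to the globally generated rank-$c$ bundle $N_{X/\bP^N}(-1)$; this is exactly where the bound $j<n-c$ comes from, via Lemma \ref{lemmedroite}. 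Two smaller points: your hyperplane-slicing induction is miscalibrated, since $X\cap H\subset\bP^{N-1}$ again has codimension $c$, not $c-1$, so the hypothesis does not improve to $j<(n-1)-(c-1)$ and the induction does not close; and the correction $\min\{j,1\}$ does not come from Bott thresholds but from the Euler sequence $0\to S^m\Om_X\to S^m\tOm_X\to S^{m-1}\tOm_X\to0$, i.e.\ from the final passage from $\tOm_X$ back to $\Om_X$ via Lemma \ref{lemmegauche}.
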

If one regards global sections of $S^k\Om_X$ as algebraic differential equations of order $1$ defined on $X$, one is naturally led to consider the problem of finding a higher order analogue to this statement. In that direction, Diverio  $\cite{Div08}$ established a vanishing theorem for Green-Griffiths jet differential bundles on complete intersection varieties (see section \ref{prelimGGjets} for the definition). 
\begin{theorem}[Diverio \cite{Div08} Theorem 7]\label{DiverioVanishing}
Let $X\subseteq \bP^N$ be a smooth complete intersection of dimension $n$ and codimension $c$. Then
$$H^0(X,E^{GG}_{k,m}\Om_X)=0$$
for all $m\geqslant 1$ and $1\leqslant k < \frac{n}{c}$.
\end{theorem}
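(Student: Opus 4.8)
The plan is to reduce the statement, via the canonical filtration of the Green--Griffiths bundle, to a Schneider-type vanishing for Schur powers of $\Om_X$. Recall (Section~\ref{prelimGGjets}) that $E^{GG}_{k,m}\Om_X$ carries a filtration whose associated graded bundle is
\[ \Gr^\bullet E^{GG}_{k,m}\Om_X \;=\; \bigoplus_{\ell_1+2\ell_2+\cdots+k\ell_k=m} S^{\ell_1}\Om_X\otimes S^{\ell_2}\Om_X\otimes\cdots\otimes S^{\ell_k}\Om_X. \]
Because $H^0(X,-)$ is left exact, the long exact sequences of the filtration show that $H^0(X,E^{GG}_{k,m}\Om_X)=0$ provided $H^0$ of every graded summand vanishes. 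So the first step is to reduce the theorem to proving $H^0(X,\,S^{\ell_1}\Om_X\otimes\cdots\otimes S^{\ell_k}\Om_X)=0$ for every $(\ell_1,\dots,\ell_k)$ with $\sum_i i\,\ell_i=m\geqslant 1$.

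The second step is to pass from tensor products to irreducible pieces. Over a field of characteristic zero the $GL_n$-representation $S^{\ell_1}\otimes\cdots\otimes S^{\ell_k}$ decomposes as $\bigoplus_\lambda(\mathbb{S}^\lambda)^{\oplus N_\lambda}$, and by iterating the Pieri rule --- each symmetric factor being a single row and contributing at most one new part --- only partitions $\lambda$ with $\ell(\lambda)\leqslant k$ can occur. Thus it suffices to show $H^0(X,\mathbb{S}^\lambda\Om_X)=0$ for every nonzero partition $\lambda$ with at most $k$ rows. The restriction on the number of rows is genuinely needed: the extreme partition $\lambda=(1^n)$ gives $\mathbb{S}^\lambda\Om_X=K_X$, which has many sections for a complete intersection of general type, so no bound on the weighted degree alone could suffice.

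The heart of the matter is therefore the following Schur-bundle refinement of Schneider's theorem: if $X\subset\bP^N$ is a smooth complete intersection of dimension $n$ and codimension $c$, then $H^0(X,\mathbb{S}^\lambda\Om_X)=0$ whenever $\ell(\lambda)<n/c$. Granting this, the hypothesis $1\leqslant k<n/c$ forces $\ell(\lambda)\leqslant k<n/c$ for every partition arising above, and the theorem follows. To prove the refinement I would imitate Schneider's argument: the Euler sequence realizes $\Om_{\bP^N}$ as a subbundle of $\cO(-1)^{\oplus(N+1)}$, the conormal sequence $0\to\bigoplus_{i=1}^c\cO_X(-d_i)\to\Om_{\bP^N}|_X\to\Om_X\to0$ controls $\Om_X$ for the complete intersection, and the Koszul complex of $X$ in $\bP^N$ resolves the resulting sheaves in terms of twists of $\cO_{\bP^N}$; tracking the Bott/Serre vanishing through these resolutions should yield the threshold, the single-row case $\ell(\lambda)=1$ recovering exactly Schneider's condition $c<n$.

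\textbf{Main obstacle.} The difficult step is this last one: upgrading Schneider's vanishing from symmetric powers (one-row $\lambda$) to all Schur powers with a prescribed number of rows, and doing the plethysm/Koszul bookkeeping carefully enough that the sharp numerical threshold $c\cdot\ell(\lambda)<n$ emerges, rather than a weaker bound. This is precisely the generalization of Schneider's theorem announced in the abstract; once it is in hand, the passage to the Green--Griffiths setting is the routine filtration-and-representation-theory argument sketched above.
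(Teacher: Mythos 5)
Your proposal is essentially a reconstruction of Diverio's original argument, and as a strategy it is sound; but it is genuinely different from the route this paper takes, and the step you flag as the ``main obstacle'' is left unproved. The missing ingredient --- $H^0(X,\mathbb{S}^\lambda\Om_X)=0$ for a smooth complete intersection whenever $\lambda\neq 0$ has at most $k$ rows and $kc<n$ --- is exactly the Bruckmann--Rackwitz vanishing theorem \cite{B-R90}, which is precisely what Diverio invokes in \cite{Div08} and which the paper explicitly names as his input; note also that your sketch of its proof already uses the complete intersection hypothesis in an essential way (the conormal bundle splits as $\bigoplus_i\cO_X(-d_i)$). The paper instead recovers the statement as the case $a=0$, $j=0$ of Corollary \ref{VanishingGGjets}: after the same filtration reduction (Lemma \ref{DiverioLemma}), it does \emph{not} decompose the graded pieces into Schur functors, but proves vanishing directly for the tensor products $S^{\ell_1}\Om_X\otimes\cdots\otimes S^{\ell_k}\Om_X\otimes\cO_X(a)$ (Theorem \ref{SchneiderOm}), by resolving each factor through Koszul-type complexes built from the tilde conormal and Euler exact sequences for $\tOm_X$, feeding these into the cohomological Lemmas \ref{lemmedroite} and \ref{lemmegauche}, and concluding with the Ein--Lazarsfeld/Manivel extension of Le Potier's theorem (Theorem \ref{Manivel}). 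What that buys is substantial: no complete intersection hypothesis (only global generation of $N_{X/\bP^N}(-1)$, valid for any smooth subvariety), vanishing in all degrees $j<n-kc$ rather than just $j=0$, and an arbitrary twist $\cO_X(a)$. Your two reduction steps (left exactness of $H^0$ through the filtration, and the Pieri bound $\ell(\lambda)\leqslant k$) are correct, and your observation that $\lambda=(1^n)$ yields $K_X$ is the right sanity check for why the row bound cannot be dropped; but as written your proof is complete only modulo citing Bruckmann--Rackwitz rather than proving it.
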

Pacienza and Rousseau $\cite{P-R08}$ constructed a generalized version of Green-Griffiths jet differential bundle and they proved a similar result for those more general bundles (see Theorem $\ref{P-RVanishing}$).\\ 
The aim of our work is to generalize, and unify, Schneider's and Diverio's result. Our main result is the following (see Theorem \ref{SchneiderOm}).
\begin{theointro}
Let $X\subseteq \bP^N$ be a smooth variety of dimension $n$ and codimension $c=N-n$. Consider an integer $k\geqslant 1$, integers $\ell_1,\dots, \ell_k\geqslant 0$, and $a\in \mathbb{Z}$. If $j< n-k\cdot c$ and $a<\ell_1+\cdots +\ell_k-\min\{j,k\}$, then 
$$H^j(X,S^{\ell_1}\Om_X\otimes \cdots \otimes S^{\ell_k}\Om_X\otimes \cO_{X}(a))=0.$$
\end{theointro}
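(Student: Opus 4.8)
The plan is to prove this by induction on $k$, using Schneider's original theorem as the base case $k=1$ (where $S^{\ell_1}\Om_X\otimes\cO_X(a)$ vanishes in degree $j<n-c$ when $a<\ell_1-\min\{j,1\}$). For the inductive step I would like to relate the cohomology of a $k$-fold tensor product $S^{\ell_1}\Om_X\otimes\cdots\otimes S^{\ell_k}\Om_X$ to that of $(k-1)$-fold products, peeling off one symmetric factor at a time. The natural tool is the Euler-type sequence coming from the embedding $X\subseteq\bP^N$. Concretely, the restriction to $X$ of the standard exact sequences relating $\Om_{\bP^N}$, $\cO_{\bP^N}(-1)$ and $\cO_{\bP^N}$, together with the conormal sequence $0\to N^*_{X/\bP^N}\to\Om_{\bP^N}|_X\to\Om_X\to 0$, let one build resolutions of the symmetric and tensor powers of $\Om_X$ in terms of bundles that are (twists of) the trivial bundle and the conormal bundle, whose cohomology is more accessible.

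First I would set up the key exact sequence on $X$. Taking symmetric powers of the restricted Euler sequence produces, for each $\ell$, a Koszul-type complex resolving $S^\ell\Om_X$ by terms of the form $S^{\ell-i}(\Om_{\bP^N}|_X)\otimes\bigwedge^i(\cdots)$ twisted appropriately; after tensoring this with the already-understood $(k-1)$-fold product $\cF:=S^{\ell_2}\Om_X\otimes\cdots\otimes S^{\ell_k}\Om_X\otimes\cO_X(a)$, I would chase cohomology through the resulting spectral sequence or long exact sequences. The codimension drops by exactly one conormal factor at each stage, which is precisely why the Schneider bound $n-c$ improves to $n-k\cdot c$ after $k$ steps: each symmetric factor one removes costs $c$ in the range of vanishing degrees $j$, and the $\min\{j,1\}$ threshold accumulates to $\min\{j,k\}$. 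The arithmetic of the twist — verifying that $a<\ell_1+\cdots+\ell_k-\min\{j,k\}$ is exactly the condition that survives all the intermediate extensions — is the bookkeeping one must carry out carefully but which I expect to be routine once the sequences are in place.

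The main obstacle, I anticipate, is controlling the cohomology of the terms involving the conormal bundle $N^*_{X/\bP^N}$ that appear in these resolutions. Unlike the cotangent bundle, the conormal bundle is a direct sum of line bundles $\bigoplus\cO_X(-d_i)$ only in the complete intersection case; for a general smooth $X\subseteq\bP^N$ it is merely an extension-type object, so the twists appearing do not split neatly. To handle this I would look for a vanishing input for $H^\bullet(X, S^\bullet(\Om_{\bP^N}|_X)\otimes\cO_X(t))$ coming from restricting Bott-type vanishing on $\bP^N$ and Serre vanishing for positive twists, combined with an induction that treats the conormal contributions as an allowable loss of exactly $c$ in the degree range. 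The crucial point to verify is that the inductive hypothesis applies to each $(k-1)$-fold piece with the correct shifted parameters $(\ell_2,\dots,\ell_k)$ and shifted twist, so that the hypothesis $j<n-k\cdot c$ guarantees $j'<n-(k-1)\cdot c$ for the relevant sub-cohomology after accounting for the codimension-one degeneration. If that matching of indices goes through cleanly, the induction closes; if not, one may need to prove a slightly stronger statement simultaneously for all the intermediate mixed symmetric-tensor bundles, which is a common device to make such inductions self-sustaining.
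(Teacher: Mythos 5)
Your outline correctly identifies the shape of the argument (Koszul-type resolutions coming from the conormal sequence, a cohomological bookkeeping lemma, a loss of $c$ in the vanishing range per symmetric factor), and you correctly isolate the main difficulty: for a general smooth $X\subseteq\bP^N$ the conormal bundle does not split, so the terms $\Lambda^{i}N^*_{X/\bP^N}\otimes(\cdots)$ in the resolutions are exactly where the proof must do real work. But the remedy you propose for that difficulty does not work, and this is a genuine gap rather than routine bookkeeping. ``Bott-type vanishing restricted from $\bP^N$'' gives nothing here, because Bott vanishing on $\bP^N$ does not survive restriction to a subvariety, and Serre vanishing only applies for unspecified large twists, whereas the statement requires an effective bound $a<\ell_1+\cdots+\ell_k-\min\{j,k\}$. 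What is actually needed, and what the paper supplies, is a vanishing theorem for $H^j(X,\Lambda^{i_1}N^*\otimes\cdots\otimes\Lambda^{i_k}N^*\otimes\cO_X(t))$ for $t<0$: after Serre duality this becomes a group of the form $H^{n-j}(X,K_X\otimes\Lambda^{i_1}(N(-1))\otimes\cdots\otimes\Lambda^{i_k}(N(-1))\otimes A)$ with $A$ ample and $N(-1)$ globally generated of rank $c$, to which the Ein--Lazarsfeld/Manivel generalization of Le Potier's theorem applies. That theorem (not Le Potier alone, which handles a single exterior power) is the essential input your sketch is missing; it is precisely what lets the argument escape the complete intersection case.

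Two further structural points. First, your induction on $k$, peeling off one factor at a time, does not close as stated: after resolving $S^{\ell_1}\Om_X$ you must control the cohomology of $\Lambda^{i}N^*\otimes S^{\ell_1-i}(\Om_{\bP^N}|_X)$ tensored with the remaining $(k-1)$-fold product, and your inductive hypothesis only speaks of twists by line bundles $\cO_X(a)$, not by exterior powers of $N^*$. You anticipate needing ``a slightly stronger statement for the intermediate mixed bundles''; the paper's resolution of this is to resolve all $k$ factors \emph{simultaneously} and reduce, via a purely formal lemma on $k$ long exact sequences, to the vanishing of $H^j$ of $\Lambda^{i_1}N^*\otimes\cdots\otimes\Lambda^{i_k}N^*$ twisted by a line bundle in degrees $j<n-kc+i_1+\cdots+i_k$. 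Second, the paper does not use $\Om_{\bP^N}|_X$ as the ambient term but the bundle $\tOm$, for which $\tOm_{\bP^N}|_X=V\otimes\cO_X(-1)$ is a direct sum of line bundles, so its symmetric powers contribute only trivial factors $S^{\ell-i}V\otimes\cO_X(i-\ell)$; the theorem is first proved for $S^{\ell_1}\tOm_X\otimes\cdots\otimes S^{\ell_k}\tOm_X$ with the cleaner bound $a<\ell_1+\cdots+\ell_k$, and only then descended to $\Om_X$ via the Euler sequences $0\to S^{\ell}\Om_X\to S^{\ell}\tOm_X\to S^{\ell-1}\tOm_X\to 0$, which is where the correction term $\min\{j,k\}$ arises. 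Without these two devices the index-matching you hope is ``routine'' cannot be carried out.
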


From this generalization of Schneider's theorem one can deduce a new vanishing result for jet differentials (see Corollary $\ref{VanishingGGjets}$).  

\begin{theointro}
Let $X\subseteq \bP^N$ be a smooth variety of dimension $n$ and codimension $c$. Let $a\in \mathbb{Z}$.
If $j< n-k\cdot c$ and $a<\frac{m}{k}-\min\{j,k\}$ then
$$H^j(X,E^{GG}_{k,m}\Om_X\otimes \cO_X(a))=0.$$
\end{theointro}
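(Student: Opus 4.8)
The plan is to deduce this from Theorem B by exploiting the standard algebraic structure of the Green-Griffiths bundle. Recall (from the preliminary section referenced in the excerpt) that $E^{GG}_{k,m}\Om_X$ carries a natural filtration whose associated graded bundle is the direct sum
$$\Gr^\bullet E^{GG}_{k,m}\Om_X = \bigoplus_{\ell_1 + 2\ell_2 + \cdots + k\ell_k = m} S^{\ell_1}\Om_X\otimes S^{\ell_2}\Om_X\otimes\cdots\otimes S^{\ell_k}\Om_X,$$
the sum ranging over all tuples $(\ell_1,\dots,\ell_k)$ of nonnegative integers with $\sum_{i=1}^k i\,\ell_i = m$. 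Since $\cO_X(a)$ is a line bundle, tensoring by it is exact and preserves the filtration, so the graded pieces of $E^{GG}_{k,m}\Om_X\otimes\cO_X(a)$ are exactly $S^{\ell_1}\Om_X\otimes\cdots\otimes S^{\ell_k}\Om_X\otimes\cO_X(a)$.

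First I would reduce the cohomology of the filtered bundle to that of its graded pieces: running through the short exact sequences defining the filtration and chasing the associated long exact sequences in cohomology, the vanishing of $H^j(X,\Gr^p\otimes\cO_X(a))$ for every graded piece forces $H^j(X,E^{GG}_{k,m}\Om_X\otimes\cO_X(a))=0$. This step is purely formal.

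It then suffices to verify that each graded piece satisfies the hypotheses of Theorem B. The condition on $j$ is identical, namely $j < n-k\cdot c$, and there are exactly $k$ symmetric factors in each summand (some of which may be trivial, which Theorem B allows since it only requires $\ell_i\geqslant 0$). For the degree condition, the key elementary observation is that on each summand one has
$$\ell_1 + \cdots + \ell_k \;\geqslant\; \tfrac{1}{k}\bigl(\ell_1 + 2\ell_2 + \cdots + k\ell_k\bigr) = \tfrac{m}{k},$$
because $\ell_1+2\ell_2+\cdots+k\ell_k\leqslant k(\ell_1+\cdots+\ell_k)$. Combining this with the hypothesis $a < \frac{m}{k} - \min\{j,k\}$ yields $a < (\ell_1+\cdots+\ell_k) - \min\{j,k\}$, which is precisely the inequality required by Theorem B. Hence each graded piece has vanishing $j$-th cohomology, and the result follows.

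The main obstacle is not in this deduction — which is essentially formal once the filtration is in place — but rather lies entirely in Theorem B itself, the generalized Schneider vanishing that we are permitted to assume here. The only genuine content in the corollary is recognizing that the weighted-degree constraint $\sum_i i\,\ell_i = m$ forces the unweighted sum $\sum_i \ell_i$ to be at least $m/k$, so that the single hypothesis $a < \frac{m}{k} - \min\{j,k\}$ uniformly controls every graded piece at once.
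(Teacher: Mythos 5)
Your proposal is correct and follows essentially the same route as the paper: reduce to the graded pieces of the standard filtration of $E^{GG}_{k,m}\Om_X$ (the paper invokes Diverio's cohomological lemma for this formal step), then apply the generalized Schneider vanishing to each summand using the inequality $\ell_1+\cdots+\ell_k\geqslant \frac{1}{k}(\ell_1+2\ell_2+\cdots+k\ell_k)=\frac{m}{k}$. Nothing essential differs.
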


Therefore the existence of higher order jet differential equation is an obstruction to projective embeddings, analogues to the ones pointed out by Schneider. We also prove an analogous result for Pacienza-Rousseau's generalized Green-Griffiths jet differential bundles (see Corollary \ref{VanishingPRGGjets}).\\

\emph{Notation and conventions.}
 We work over the field  of complex numbers $\bC$. When $X$ is a complex manifold, we denote the tangent bundle of $X$ by $TX$, and the cotangent bundle of $X$ by $\Om_X:=TX^*$.

\section{The bundle $\tOm$}
It will be convenient for us to work with the bundle $\tOm$, studied in particular  by  Bogomolov and DeOliveira in $\cite{BdO08}$, but also by Debarre in $\cite{Deb05}$. Therefore we recall some basic fact on this bundle. When $X\subseteq \bP^N$ is an $n$-dimensional variety then $\tOm_X$ is, roughly speaking, the sheaf of differential forms on $\widehat{X}\subseteq \bC^{N+1}$, the cone over $X$, which are invariant under the $\bC^*$-action but which do not necessarily satisfy the Euler condition. A more geometrical approach is to consider the Gauss map
\begin{eqnarray*}
\gamma_X : X &\to& \Gr(n,\bP^N)\\
x&\mapsto& \mathbb{T}_xX
\end{eqnarray*}
where $\mathbb{T}_xX\subseteq \bP^N$ is the embedded tangent space of $X$ at $x$, and where $\Gr(n,\bP^N)$ denotes the Grassmannian variety of $n$-dimensional projective subspaces of $\bP^N$. Let $\mathcal{S}_n$ denote the rank-$n$ tautological bundle on $\Gr(n,\bP^N)$, then set
$$\tOm_X=\gamma_X^*\mathcal{S}_n^*\otimes \cO_X(-1).$$
Observe that $\tOm_{\bP(V)}=V\otimes \cO_{\bP(V)}(-1)$. The properties of $\tOm_X$ we will use are summarized in the following commutative diagramm.

$$\begin{CD}
  @.            0                @.                 0          @.                                 @.          \\
@.            @VVV                                @VVV                          @.                          @.\\
  @.         N^*_{X/\bP^N}            @=          N^*_{X/\bP^N}        @.                                 @.          \\   
@.            @VVV                                @VVV                          @.                          @.\\ 
0 @>>> \Om_{\bP^N_{|X}} @>>>    \tOm_{\bP^N_{|X}}       @>>>       \cO_X  @>>>    0\\  
@.            @VVV                                @VVV                          @|                          @.\\ 
0 @>>>\Om_{X}  @>>>    \tOm_{X}      @>>>       \cO_X   @>>>     0 \\
@.            @VVV                                @VVV                          @.                          @.\\
  @.           0                @.                 0            @.                                 @.         \\ \\
\end{CD}$$
We refer to $\cite{BdO08}$ and $\cite{Deb05}$ for some details. In this paper the exact sequence
$$0\to N^*_{X/\bP^N}\to \tOm_{\bP^N_{|X}}\to \tOm_X\to 0$$
will be called the \emph{tilde conormal exact sequence}. And the exact sequence 
$$0\to \Om_X \to \tOm_X \to \cO_X\to 0$$
will be called the \emph{Euler exact sequence}. 

\section{The Green-Griffiths jet differential bundles $E^{GG}_{k,m}\Om_X$}\label{prelimGGjets}
We recall very briefly the definitions of Green-Griffiths jet differential bundles. We refer to $\cite{Dem97}$ for more information, and also to $\cite{Mer10}$ where many details are carried out explicitly.
Let $X$ be a projective variety of dimension $n$. For all $k\geqslant 1$, we denote by $J_kX\stackrel{p_k}{\to} X$ the holomorphic bundle of $k$-jets of germs of holomorphic curves $f: (\bC,0)\to X$, that is  $J_kX:=\{f: (\bC,0)\to X\}/\sim$, where two germs $f,g:(\bC,0)\to X$ are equivalent $(f\sim g)$ if and only if $f^{(j)}(0)=g^{(j)}(0)$ for all $0\leqslant j\leqslant k$.
The projection is then simply defined by
\begin{eqnarray*}
J_kX&\stackrel{p_k}{\rightarrow}& X\\
f&\mapsto& f(0). 
\end{eqnarray*}


Those spaces naturally have the structure of holomorphic fiber bundles over $X$, but unless $k=1$ they are not vector bundles. Moreover, for any $x\in X$, $J_kX_x\cong \left(\bC^n\right)^k$. To see this, take local coordinates $(z_1,\dots , z_n)$ around $x\in X$; we can then write $f=(f_1,\dots,f_n)$ and the $k$-jet will then be entirely determined, by Taylor's formula, by
$$(f'_1(0),\dots , f'_n(0),f''_1(0),\dots,f''_n(0), \dots ,f^{(k)}_1(0),\dots, f^{(k)}_n(0)).$$

With the above notation, for each $k\geqslant 1$, there is a natural $\bC^*$-action on $J_kX$. Namely if $\lambda\in\bC^*$ and $f:(\bC,0)\to (X,x)$ then 
\begin{eqnarray*}
\lambda\cdot f : (\bC,0)&\to& (X,x)\\
t&\mapsto& f(\lambda t).
\end{eqnarray*}
 This action is easily expressed fiberwise. Namely, if one considers local coordinates around $x=f(0)=\lambda\cdot f(0)$ then $f$ is represented by 
$(f'(0),f''(0),\dots, f^{(k)}(0))$
and 
$$\lambda\cdot \left(f'(0),f''(0),\dots, f^{(k)}(0)\right)=\left(\lambda f'(0),\lambda^2 f''(0),\dots, \lambda^k f^{(k)}(0)\right).$$
Then the Green-Griffiths jet differentials are defined as follows. Fiberwise we consider
\begin{eqnarray*}
E_{k,m}^{GG}\Om_{X,x}:=\left\{Q\in \bC[f',\dots,f^{(k)}]\ / \ Q\left(\lambda\cdot \left(f',\dots, f^{(k)}\right)\right)=\lambda^mQ\left(f',\dots, f^{(k)}\right)\right\}.
\end{eqnarray*}
One can make this even more explicit: when $x\in X$ is fixed, one can consider coordinates $$(f_1',\dots, f_n', \dots, f_1^{(k)},\dots, f_n^{(k)})$$ 
on $J_kX_x$. Then an element $Q\in E_{k,m}^{GG}\Om_{X,x}$ is exactly a polynomial in the variables $(f_i^{(j)})$ of the form
$$Q=\sum_{\substack{I_1,\dots,I_k\in \mathbb{N}^n \\ |I_1|+2|I_2|+\cdots +k|I_k|=m }} a_{I_1,\dots , I_k}(f')^{I_1}\cdots (f^{(k)})^{I_k}$$
where we use the standard multi-index notation for $I=(i_1,\dots , i_n)$ and $1\leqslant j \leqslant k$ we set $\left(f^{(j)}\right)^{I}:=\left(f_{1}^{(j)}\right)^{i_1}\cdots \left(f_{n}^{(j)}\right)^{i_n}$. It turns out that these fibers can be arranged into a vector bundle over $X$.\\
The bundle $E_{k,m}^{GG}\Om_X$ admits a natural filtration. We briefly recall its construction; however, we will not go into details, in particular we don't justify why everything is well-defined. \\
Fix local coordinates around $x\in X$ as above. For each $p\in \mathbb{N}$ and for each $1\leqslant s\leqslant k$ define
\begin{eqnarray*}
F_s^p=F_s^p(E_{k,m}^{GG}\Om_{X,x}) =\left\{ 
\begin{array}{c}
Q\in E_{k,m}^{GG}\Om_{X,x}\ \text{involving only monomials}\ (f')^{I_1}\cdots (f^{(k)})^{I_k}\\
 \text{with}\ |I_1|+2|I_2|+\cdots +s|I_s|\geqslant p
\end{array}
\right\}.
\end{eqnarray*}
This gives a filtration 
$$\{0\}=F_s^{m+1}\subseteq F_s^{m}\subseteq\cdots\subseteq F^1_s\subseteq F^0_s=E_{k,m}^{GG}\Om_X.$$
We consider the associated graded terms 
$$\Gr_{s}^p=\Gr_s^p(E_{k,m}^{GG}\Om_X):=F^{p}_s/F^{p+1}_s.$$
Observe that 
\begin{eqnarray*}
\Gr_{k-1}^p&\cong& \left\{\begin{array}{c}
Q\ \text{involving only monomials}\ (f')^{I_1}\cdots (f^{(k)})^{I_k}\\
 \text{with}\ |I_1|+2|I_2|+\cdots +(k-1)|I_{k-1}|=p 
\end{array}\right\}\\
&\cong& \left\{\begin{array}{c}
Q\ \text{involving only monomials}\ (f')^{I_1}\cdots (f^{(k)})^{I_k}\\
 \text{with}\ k|I_{k}|=m-p 
\end{array}\right\}.
\end{eqnarray*}
Therefore $\Gr_{k-1}^p\neq 0$ if and only if there is an integer $\ell_k\in \mathbb{N}$ such that $p=m-k\ell_k$. Whenever this is satisfied, if one looks closely at the coordinate changes, one observe that 
$$\Gr_{k-1}^p=\Gr_{k-1}^{m-k\ell_k}\cong E_{k-1,m-k\ell_k}^{GG}\Om_X\otimes S^{\ell_k}\Om_X.$$
And therefore,
$$\Gr_{k-1}^{\bullet}=\bigoplus_{1\leq p\leq m}\Gr_{k-1}^p\cong \bigoplus_{0\leqslant \ell_k\leqslant \lfloor \frac{m}{k}\rfloor} E_{k-1,m-k\ell_k}^{GG}\Om_X\otimes S^{\ell_k}\Om_X.$$
Combining all those filtrations, we find inductively a filtration $F^{\bullet}$ on $E_{k,m}^{GG}\Om_X$ such that the associated graded bundle is

$$\Gr^{\bullet}(E_{k,m}^{GG}\Om_X)=\bigoplus_{\ell_1+2\ell_2+\cdots +k\ell_k=m}S^{\ell_1}\Om_X\otimes\cdots \otimes S^{\ell_k}\Om_X.$$

  \section{Pacienza-Rousseau generalized jet differential bundles}

The Green-Griffiths jet differential bundles were constructed to study entire maps $f:\bC\to X$. Pacienza and Rousseau $\cite{P-R08}$ generalized this construction to study more generally holomorphic maps $f:\bC^p\to X$. For each $p\geq 1$ they constructed bundles $E_{p,k,m}^{GG}\Om_X$ generalizing $E_{k,m}^{GG}\Om_X=E_{1,k,m}^{GG}\Om_X$. Their construction is very similar to the one described above, however there are some unexpected difficulties appearing. We just really briefly recall the definitions. For the details we refer to \cite{P-R08}.\\

Fix $p\geq1$. Consider the space $J_{k,p}X$ of $k$-jets of germs of holomorphic maps $f:(\bC^p,0)\to X$. As above, this space comes with a natural $(\bC^*)^p$-action. Namely, for $f:(\bC^p,0)\to X$ and $\lambda=(\lambda_1,\dots , \lambda_p)\in(\bC^*)^p$ we define
\begin{eqnarray*}
\lambda\cdot f:(\bC^p,0)&\to& X\\
(t_1,\dots, t_p)&\mapsto& f(\lambda_1t_1,\dots, \lambda_pt_p).
\end{eqnarray*}
The bundles $E_{p,k,m}^{GG}\Om_X$ are defined as follows: for each $x\in X$ we consider
\begin{eqnarray*}
E_{p,k,m}^{GG}\Om_{X,x}:=\left\{\begin{array}{c}Q(f',\dots, f^{(k)}) \ / \ Q(\lambda\cdot (f',\dots, f^{(k)})=\lambda_1^m\cdots \lambda_p^mQ(f',\dots, f^{(k)})\\ \text{for all}\ \lambda=(\lambda_1,\dots , \lambda_p)\in(\bC^*)^p\end{array}\right\}.
\end{eqnarray*}
 Those spaces can be arranged into vector bundles. The only thing we will need concerning these bundles is that $E^{GG}_{p,k,m}\Om_X$ admits a filtration whose graded terms are 
$$\bigotimes_{\alpha\in I_1}S^{q_{\alpha}^1}\Om_X\cdots \bigotimes_{\alpha\in I_k}S^{q_{\alpha}^k}\Om_X,$$
where 
$$\sum_{\ell=1}^k\sum_{\alpha\in I_{\ell}}q_{\alpha}^{\ell}\alpha=(m,\dots,m),$$
and where for $\ell\in\mathbb{N}$, $I_{\ell}:=\{\alpha=(\alpha_1,\dots, \alpha_p)\in \mathbb{N}^p\ /\ \alpha_1+\cdots +\alpha_p=\ell\}$. See Remark 2.6 in  $\cite{P-R08}$.

\section{A cohomological lemma}

During the proof of our main result, we will need an elementary cohomological lemma.
\begin{lemma}\label{lemmedroite}
Let $X$ be a projective variety. Let $G$ be a vector bundle on $X$. Let $k\geqslant 1$. Suppose we have $k$ long exact sequences of vector bundles on $X$:
\begin{eqnarray*}
0\to E_1^{\ell}\to E_1^{{\ell}-1}\to &\cdots & \to E_1^{1}\to E_1^0 \to F_1 \to 0 \\
0\to E_2^{\ell}\to E_2^{{\ell}-1}\to &\cdots & \to E_2^{1}\to E_2^0 \to F_2 \to 0 \\
&\vdots & \\
0\to E_k^{\ell}\to E_k^{{\ell}-1}\to &\cdots & \to E_k^{1}\to E_k^0 \to F_k \to 0. 
\end{eqnarray*}
Fix an integer $q$. Suppose that 
$$H^j(X,E_1^{i_1}\otimes \cdots \otimes E_k^{i_k}\otimes G)=0 \ \ for \ all \ \ j\leqslant q+i_1+\cdots +i_k.$$
Then 
$$H^j(X,F_1\otimes \cdots \otimes F_k\otimes G)=0  \ \ for\ all \ \ j\leqslant q.$$
\end{lemma}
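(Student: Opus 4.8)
The plan is to prove the lemma by induction on $k$, reducing the statement about $k$ simultaneous resolutions to the single-resolution case ($k=1$), which itself is handled by the standard technique of splitting a long exact sequence into short exact sequences and running a cohomology diagram chase.

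First I would establish the base case $k=1$. Here we have one long exact sequence $0\to E^{\ell}\to \cdots \to E^0\to F\to 0$, and the hypothesis reads $H^j(X,E^{i}\otimes G)=0$ for all $j\leqslant q+i$; the goal is $H^j(X,F\otimes G)=0$ for $j\leqslant q$. The key step is to break the long exact sequence into short exact sequences by introducing the kernels/cokernels $Z^i$, giving
$$0\to E^{\ell}\to E^{\ell-1}\to \cdots,\qquad 0\to Z^{i+1}\to E^i\to Z^i\to 0,\qquad Z^0=F.$$
Tensoring each short exact sequence by $G$ (which preserves exactness since $G$ is a vector bundle, hence flat) and taking the long exact sequence in cohomology, I would show by descending induction on $i$ that $H^j(X,Z^i\otimes G)=0$ for all $j\leqslant q+i$. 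Indeed, from $0\to Z^{i+1}\to E^i\to Z^i\to 0$ one gets the connecting maps $H^j(E^i\otimes G)\to H^j(Z^i\otimes G)\to H^{j+1}(Z^{i+1}\otimes G)$; the first term vanishes for $j\leqslant q+i$ by hypothesis, and the third vanishes for $j+1\leqslant q+(i+1)$, i.e.\ $j\leqslant q+i$, by the inductive step, forcing the middle term to vanish in that range. Setting $i=0$ yields the desired conclusion for $F=Z^0$.

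Next I would carry out the inductive step in $k$. Suppose the result holds for $k-1$. The idea is to first process the $k$-th resolution against the fixed auxiliary bundle $G$ while treating $E_1^{i_1}\otimes\cdots\otimes E_{k-1}^{i_{k-1}}$ as part of the coefficient bundle. Concretely, fix indices $i_1,\dots,i_{k-1}$ and apply the $k=1$ case to the $k$-th long exact sequence with auxiliary bundle $G':=E_1^{i_1}\otimes\cdots\otimes E_{k-1}^{i_{k-1}}\otimes G$ and with shifted parameter $q':=q+i_1+\cdots+i_{k-1}$. The hypothesis of the lemma guarantees exactly that $H^j(X,E_1^{i_1}\otimes\cdots\otimes E_k^{i_k}\otimes G)=0$ for $j\leqslant q'+i_k$, so the $k=1$ case produces
$$H^j\bigl(X,\,E_1^{i_1}\otimes\cdots\otimes E_{k-1}^{i_{k-1}}\otimes F_k\otimes G\bigr)=0\qquad\text{for all } j\leqslant q+i_1+\cdots+i_{k-1}.$$
This holds for every choice of $i_1,\dots,i_{k-1}$. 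But this is precisely the hypothesis of the lemma for the first $k-1$ resolutions, with the auxiliary bundle replaced by $F_k\otimes G$ (again a vector bundle). Applying the induction hypothesis for $k-1$ then gives $H^j(X,F_1\otimes\cdots\otimes F_{k-1}\otimes F_k\otimes G)=0$ for all $j\leqslant q$, completing the induction.

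The main obstacle I anticipate is purely bookkeeping rather than conceptual: keeping the index-shifting consistent so that the hypothesis $j\leqslant q+i_1+\cdots+i_k$ lines up exactly with what each application of the base case and of the induction hypothesis demands. The crucial design choice is to absorb the tensor factors $E_1^{i_1}\otimes\cdots\otimes E_{k-1}^{i_{k-1}}$ into the auxiliary bundle together with the compensating shift $q'=q+i_1+\cdots+i_{k-1}$, which is what makes the single-resolution conclusion match the hypothesis of the $(k-1)$-case on the nose. Flatness of vector bundles (exactness of $-\otimes G$) is used throughout and is the only input ensuring the tensored sequences remain exact.
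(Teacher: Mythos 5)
Your proposal is correct and follows essentially the same strategy as the paper: induction on $k$, with the base case $k=1$ handled by splitting the long exact sequence into short exact sequences and chasing cohomology, and the inductive step performed by absorbing tensor factors into the auxiliary bundle with the matching shift of $q$. The only (immaterial) difference is the order of the two reductions in the inductive step: the paper first tensors the extra resolution by $F_1\otimes\cdots\otimes F_k\otimes G$ and then invokes the induction hypothesis to kill the terms $E_{k+1}^i\otimes F_1\otimes\cdots\otimes F_k\otimes G$, whereas you first run the single-resolution case against the $E$-factors and then apply the induction hypothesis with $F_k\otimes G$ as the auxiliary bundle.
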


\begin{remark}
The case $k=1$ appears already in Schneider's article \cite{Sch92}.
\end{remark}

\begin{proof}
The proof is straightforward, but we write it down for the sake of completeness. This is just an induction on $k$.\\

Let us start when $k=1$. We proceed by induction on ${\ell}$.\\
When ${\ell}=0$, this is obvious. When ${\ell}=1$, we just have one short exact sequence
$$0\to E_1^1\to E_1^0\to F_1\to 0.$$
Tensoring it by $G$ and looking at the associated long exact sequence in cohomology gives the result. Now when ${\ell}\geqslant 2$, we cut the long exact sequence into two pieces, and tensor everything by $G$ to obtain
\begin{eqnarray*}
0\to E_1^{\ell}\otimes G\to E_1^{{\ell}-1}\otimes G\to &K\otimes G& \to 0 \\
0\to&K\otimes G&\to E_1^{{\ell}-2}\otimes G \to \cdots \to E_1^1\otimes G\to E_1^0\otimes G\to F_1\otimes G \to 0.
\end{eqnarray*}
Apply the global section functor to the first exact sequence to obtain $H^j(X,K\otimes G)=0$ for all $j\leqslant q+{\ell}-1$, and then apply the induction hypothesis.\\
  
Now we let $k\geqslant 2$.
Suppose the result holds for any family of $k$ exact sequences.
Take one more exact sequence 
\begin{eqnarray}
0\to E_{k+1}^{\ell}\to E_{k+1}^{{\ell}-1}\to \cdots \to E_{k+1}^1\to E_{k+1}^0\to F_{k+1}\to 0.\label{exactsequence-c+1}
\end{eqnarray}
Suppose that $H^j(X,E_1^{i_1}\otimes \cdots \otimes E_{k+1}^{i_{k+1}}\otimes G)=0$ for all $j\leqslant q+i_1+\cdots + i_{k+1}$.

 Tensoring the exact sequence (\ref{exactsequence-c+1}) by $F_1\otimes \cdots \otimes F_k\otimes G$, we obtain
 
 $$0\to \widetilde{E}_{k+1}^{\ell}\to \widetilde{E}_{k+1}^{{\ell}-1}\to \cdots \to \widetilde{E}_{k+1}^1\to \widetilde{E}_{k+1}^0 \to F_1\otimes \cdots \otimes F_{k+1}\otimes G\to 0,$$
 where $\widetilde{E}_{k+1}^i:=E_{k+1}^i\otimes F_1 \otimes \cdots \otimes F_k\otimes G$.
Therefore, to prove that $H^j(X,F_1\otimes \cdots \otimes F_{k+1}\otimes G)=0$ for all $j\leqslant q$, it suffices to prove that for any $0\leqslant i \leqslant {\ell}$,
$$H^j(X,\widetilde{E}_{k+1}^{i})=0$$
 for all $j \leqslant q+i$. To do so, fix $0\leqslant i\leqslant {\ell}$. Consider the $k$ long exact sequences
\begin{eqnarray*}
0\to E_1^{\ell}\otimes E_{k+1}^i\to E_1^{{\ell}-1}\otimes E_{k+1}^i\to &\cdots & \to E_1^{1}\otimes E_{k+1}^i\to E_1^0 \otimes E_{k+1}^i\to F_1\otimes E_{k+1}^i \to 0 \\
0\to E_2^{\ell}\to E_2^{{\ell}-1}\to &\cdots & \to E_2^{1}\to E_2^0 \to F_2 \to 0 \\
&\vdots & \\
0\to E_k^{\ell}\to E_k^{{\ell}-1}\to &\cdots & \to E_k^{1}\to E_k^0 \to F_k \to 0. \\
\end{eqnarray*}
By hypothesis,  $H^j(X,E_1^{i_1}\otimes E_{k+1}^i\otimes E_2^{i_2}\otimes \cdots \otimes E_k^{i_k}\otimes G)=0$ for all $j\leqslant q+ i_1+\cdots + i_k + i$. Therefore by the induction hypothesis, we obtain that $H^j(X,\widetilde{E}_{k+1}^i)=H^j(X,E_{k+1}^i\otimes F_1 \otimes \cdots \otimes F_k\otimes G)=0$ for all $j\leqslant q+i$. This concludes the proof.
\end{proof}

Similarly, we obtain the following.

\begin{lemma}\label{lemmegauche}
Let $X$ be a projective variety. Let $k\geqslant 1$. Suppose we have $k$ long exact sequences of vector bundles on $X$,
\begin{eqnarray*}
0\to F_1\to E_1^{0}\to E_1^1 &\cdots & \to E_1^{{\ell}-1}\to E_1^{\ell} \to 0 \\
0\to F_2\to E_2^{0}\to E_2^1 &\cdots & \to E_2^{{\ell}-1}\to E_2^{\ell} \to 0 \\
&\vdots & \\
0\to F_k\to E_k^{0}\to E_k^1 &\cdots & \to E_k^{{\ell}-1}\to E_k^{\ell} \to 0. 
\end{eqnarray*}
Fix an integer $q$. Suppose that 
$$H^j(X,E_1^{i_1}\otimes \cdots \otimes E_k^{i_k}\otimes G)=0 \ \ for\ all \ \ j\leqslant q-i_1-\cdots -i_k.$$
Then 
$$H^j(X,F_1\otimes \cdots \otimes F_k\otimes G)=0  \ \ for\ all \ \ j\leqslant q.$$
\end{lemma}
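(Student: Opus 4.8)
The plan is to mirror, step for step, the argument just given for Lemma \ref{lemmedroite}: the present statement is its exact dual, obtained by reversing every arrow (so each $F_i$ now sits at the \emph{left} end of a co-resolution rather than the right end of a resolution) and flipping the sign in the degree bound from $q+i_1+\cdots+i_k$ to $q-i_1-\cdots-i_k$. I would therefore run the same double induction: the outer induction on the number $k$ of exact sequences, and the base case $k=1$ treated by an inner induction on the length $\ell$. Nothing new is needed beyond long exact sequences in cohomology and the fact that tensoring with a vector bundle preserves exactness; only the direction in which the bookkeeping propagates changes.

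For the base case $k=1$ I argue by induction on $\ell$. The case $\ell=0$ is trivial, and $\ell=1$ is a single short exact sequence $0\to F_1\to E_1^0\to E_1^1\to 0$: tensoring by $G$ and reading the long exact sequence, $H^j(X,F_1\otimes G)$ is squeezed between $H^{j-1}(X,E_1^1\otimes G)$ and $H^j(X,E_1^0\otimes G)$, and for $j\leqslant q$ the hypotheses at indices $1$ and $0$ (vanishing in degrees $\leqslant q-1$ and $\leqslant q$ respectively) kill both neighbours. For $\ell\geqslant 2$ I cut off the top two terms, introducing $K=\ker(E_1^{\ell-1}\to E_1^\ell)$, so that the co-resolution splits into $0\to K\to E_1^{\ell-1}\to E_1^\ell\to 0$ and $0\to F_1\to E_1^0\to\cdots\to E_1^{\ell-2}\to K\to 0$. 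Applying the global section functor to the short sequence gives $H^j(X,K\otimes G)=0$ for $j\leqslant q-\ell+1$, which is precisely what the inner induction hypothesis needs in order to treat the shortened co-resolution (with $K$ playing the role of the top term) at the same value of $q$.

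For the inductive step on $k$, given $k+1$ co-resolutions I tensor the $(k+1)$-st one by $F_1\otimes\cdots\otimes F_k\otimes G$; exactness is preserved since all sheaves are locally free, producing a co-resolution of $F_1\otimes\cdots\otimes F_{k+1}\otimes G$ with terms $\widetilde E_{k+1}^i:=E_{k+1}^i\otimes F_1\otimes\cdots\otimes F_k\otimes G$. By the already-established $k=1$ case it then suffices to prove $H^j(X,\widetilde E_{k+1}^i)=0$ for $j\leqslant q-i$, for each fixed $0\leqslant i\leqslant\ell$. To obtain this, I absorb the factor $E_{k+1}^i$ into the first of the remaining $k$ co-resolutions and invoke the outer induction hypothesis with $q$ replaced by $q-i$; the global hypothesis specialised at $i_{k+1}=i$ supplies exactly the vanishing $H^j(X,E_1^{i_1}\otimes E_{k+1}^i\otimes E_2^{i_2}\otimes\cdots\otimes E_k^{i_k}\otimes G)=0$ for $j\leqslant(q-i)-i_1-\cdots-i_k$ that the induction hypothesis consumes.

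I do not expect any genuine obstacle, since there is no new geometric input: everything reduces to the long exact cohomology sequence together with the local freeness that keeps tensoring exact. The one point requiring care is purely the arithmetic of the degree bounds, and it must be tight rather than merely approximate. In the base case one verifies that cutting off the top two terms forces $K$ to vanish in exactly the range $j\leqslant q-\ell+1$ demanded by the shortened co-resolution; in the outer step one verifies that absorbing $E_{k+1}^i$ shifts the relevant bound to $q-i$ and that the $-i_1-\cdots-i_k$ term in the hypothesis cancels this shift precisely. An off-by-one error anywhere would break the squeeze in one of the long exact sequences, so checking that all these inequalities line up is the whole substance of the proof.
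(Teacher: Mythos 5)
Your proof is correct, and it is exactly the argument the paper intends: the paper omits the proof of this lemma entirely ("Similarly, we obtain the following"), leaving the reader to dualize the proof of Lemma \ref{lemmedroite}, which is precisely what you have done, with the degree bookkeeping (the shift from $q+i_1+\cdots+i_k$ to $q-i_1-\cdots-i_k$, the bound $j\leqslant q-\ell+1$ for $K$, and the replacement of $q$ by $q-i$ in the outer step) checked correctly at each stage.
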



\section{Main results}

To prove his result, Schneider considered a Kozsul resolution of $S^k\Om_X$. Then, applying his cohomological lemma with Le Potier's vanishing theorem \cite{LeP75}, he was able to conclude.
Here we follow the same idea in our more general setting. However, we will not be able to apply directly the vanishing theorem of Le Potier, we have to replace it with a more general vanishing theorem. The following theorem appears first in a work of Ein and Lasarsfeld \cite{E-L93}, but this result follows from Le Potier's theorem thanks to an idea of Manivel. We refer to \cite{E-L93} and \cite{Man97} for more details.

\begin{theorem}[Ein-Lazarsfeld/Manivel]\label{Manivel}
Let $X$ be a smooth projective variety of dimension $n$. Let $E_1,\dots, E_r$ be vector bundles on $X$, of ranks $e_1,\dots,e_r$, and let $A$ be an ample line bundle on $X$. Assume that each $E_i$ is generated by global sections, and fix $a_1,\dots, a_r\geqslant 1$. Then
$$H^k(X,K_X\otimes \Lambda^{a_i}E_1\otimes \cdots \otimes \Lambda^{a_r}E_r\otimes A)=0$$
for $k>(e_1-a_1)+\cdots +(e_r-a_r)$.
\end{theorem}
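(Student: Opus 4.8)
The plan is to use global generation to realize each exterior power as the direct image of a line bundle from a Grassmann bundle, transport the cohomology upstairs by the Leray spectral sequence, and there invoke Le Potier's theorem. For each $i$ let $\pi_i\colon \Gr_{a_i}(E_i)\to X$ be the relative Grassmannian of rank-$a_i$ quotients of $E_i$, with tautological sequence $0\to \cS_i\to \pi_i^*E_i\to \mathcal{Q}_i\to 0$, where $\mathcal{Q}_i$ has rank $a_i$ and $\cS_i$ has rank $e_i-a_i$. Since $\det\mathcal{Q}_i$ restricts to the Pl\"ucker bundle $\cO(1)$ on each fiber $\Gr(a_i,e_i)$, relative Borel--Weil--Bott gives $\pi_{i*}\det\mathcal{Q}_i=\Lambda^{a_i}E_i$ with $R^{>0}\pi_{i*}\det\mathcal{Q}_i=0$.

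First I would form the fiber product $\pi\colon G=\Gr_{a_1}(E_1)\times_X\cdots\times_X\Gr_{a_r}(E_r)\to X$, of relative dimension $d=\sum_i a_i(e_i-a_i)$, and set $L=\bigotimes_i\det\mathcal{Q}_i$ (pulled back from each factor). Base change along the fiber product yields $R^0\pi_*L=\bigotimes_i\Lambda^{a_i}E_i$ and $R^{>0}\pi_*L=0$, so the Leray spectral sequence for $\pi^*(K_X\otimes A)\otimes L$ degenerates and the projection formula gives
$$H^k\bigl(X,K_X\otimes\textstyle\bigotimes_i\Lambda^{a_i}E_i\otimes A\bigr)\;\cong\;H^k\bigl(G,\pi^*(K_X\otimes A)\otimes L\bigr).$$
It therefore suffices to prove that the right-hand group vanishes for $k>\sum_i(e_i-a_i)$.

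Next I would rewrite the line bundle on $G$ relative to $K_G$. Using that the relative tangent bundle is $\bigoplus_i\cS_i^\vee\otimes\mathcal{Q}_i$, one computes the relative canonical bundle $K_{G/X}=\bigotimes_i\bigl((\pi^*\det E_i)^{a_i}\otimes(\det\mathcal{Q}_i)^{-e_i}\bigr)$, whence
$$\pi^*(K_X\otimes A)\otimes L=K_G\otimes\Bigl(\pi^*A\otimes\textstyle\bigotimes_i(\pi^*\det E_i)^{-a_i}\otimes\bigotimes_i(\det\mathcal{Q}_i)^{e_i+1}\Bigr).$$
The twisting bundle in parentheses is relatively ample (each $\det\mathcal{Q}_i$ is relatively the Pl\"ucker polarization) and $\pi^*A$ is ample along the base, but the factors $(\pi^*\det E_i)^{-a_i}$ destroy global nefness, so Kodaira--Nakano vanishing (which would force vanishing for all $k>0$) does not apply.

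The hard part is exactly this last step: one must feed the twist into Le Potier's theorem $H^q(G,\Om^p_G\otimes\cE)=0$ for $p+q\geq \dim G+\operatorname{rank}\cE$ (applied with a suitable ample vector bundle $\cE$ assembled from the relatively positive factors) in such a way that the failure of global positivity contributes \emph{precisely} $\sum_i(e_i-a_i)$ to the vanishing range, rather than the cruder bound $\operatorname{rank}\bigotimes_i\Lambda^{a_i}E_i$ that a direct application of Le Potier on $X$ would produce. Balancing the ranks of the positive pieces against the relative dimension $d=\sum_i a_i(e_i-a_i)$ of $G$ is what yields the sharp constant, and this numerical matching---Manivel's refinement of Le Potier---is the crux. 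The case $r=1$ already contains the whole difficulty; the general case follows by running the same computation on the fiber product $G$.
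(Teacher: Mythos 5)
First, a point of reference: the paper does not prove Theorem~\ref{Manivel} at all --- it quotes it from Ein--Lazarsfeld and Manivel, remarking only that it ``follows from Le Potier's theorem thanks to an idea of Manivel''. So your attempt must be judged against that known argument, which is far shorter than what you set up. Manivel's idea is purely algebraic: since $\Lambda^{a}(E_1\oplus\cdots\oplus E_r)=\bigoplus_{b_1+\cdots+b_r=a}\Lambda^{b_1}E_1\otimes\cdots\otimes\Lambda^{b_r}E_r$, the bundle $\Lambda^{a_1}E_1\otimes\cdots\otimes\Lambda^{a_r}E_r$ is a direct summand of $\Lambda^{a}E$ for $E=E_1\oplus\cdots\oplus E_r$, which is globally generated of rank $e=\sum e_i$, with $a=\sum a_i$. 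The single-bundle Le Potier vanishing (in the form valid for a globally generated, hence nef, bundle twisted by an ample line bundle --- this is the special case of Manivel's Theorem~A quoted at the end of the paper) then gives vanishing of $H^k(X,K_X\otimes\Lambda^aE\otimes A)$ for $k>e-a=\sum_i(e_i-a_i)$, which is exactly the claimed bound. The multi-bundle case is thus an immediate consequence of the single-bundle case.

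Your proposal, by contrast, has a genuine gap precisely at the step you yourself flag as ``the hard part''. The reductions you perform are correct: $\pi_{i*}\det\mathcal{Q}_i=\Lambda^{a_i}E_i$ with vanishing higher direct images, the Leray/projection-formula identification of the two cohomology groups, and the formula for $K_{G/X}$ all check out. But they only transport the problem to $G$ without solving it. On $G$ the line bundle $\pi^*A\otimes\bigotimes_i(\pi^*\det E_i)^{-a_i}\otimes\bigotimes_i(\det\mathcal{Q}_i)^{e_i+1}$ is not nef (as you note), and the sentence asking to ``feed the twist into Le Potier's theorem \dots\ in such a way that the failure of global positivity contributes precisely $\sum_i(e_i-a_i)$'' is a restatement of the desired conclusion rather than an argument: no ample bundle $\cE$ on $G$ is exhibited, and no mechanism is given for why the deficit should come out to $\sum_i(e_i-a_i)$ rather than something involving the relative dimension $\sum_i a_i(e_i-a_i)$. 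The closing claim that the general case ``follows by running the same computation on the fiber product'' is also unsupported within your framework, since the additivity of the bounds over the common base $X$ is exactly what would need to be proved there; in the direct-summand approach it is automatic. I would recommend dropping the Grassmannian bundles and writing out the direct-summand reduction to the rank-one-of-exterior-powers case of Le Potier instead.
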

 
The case $r=1$ is the original result of Le Potier \cite{LeP75}.
 
Before we continue we recall a well-known fact. Consider a subvariety $X\subseteq \bP^N$, and let $N_{X/\bP^N}$ denote the normal bundle to $X$ in $\bP^N$. Then $N_{X/\bP^N}\otimes \cO_{X}(-1)$ is globally generated. This follows directly from the normal exact sequence and the Euler exact sequence. As a matter of notation, we fix an $(N+1)$-dimensional complex vector space such that $\bP^N=\bP(V)$.  \\

Putting all this together, we can prove the following.

\begin{theorem}\label{SchneidertOm}
Let $X\subseteq \bP^N$ be a smooth variety of dimension $n$ and codimension $c=N-n$. Consider an integer $k\geqslant 1$, integers $\ell_1,\dots, \ell_k\geqslant 0$, and $a\in \mathbb{Z}$. If $j< n-k\cdot c$ and $a<\ell_1+\cdots +\ell_k$, then 
$$H^j(X,S^{\ell_1}\tOm_X\otimes \cdots \otimes S^{\ell_k}\tOm_X\otimes \cO_{X}(a))=0.$$
\end{theorem}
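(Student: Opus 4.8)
The plan is to imitate Schneider's strategy, resolving each symmetric power and then feeding the resulting exact sequences into the cohomological lemma. For each $s=1,\dots,k$, the tilde conormal exact sequence $0\to N^*_{X/\bP^N}\to\tOm_{\bP^N_{|X}}\to\tOm_X\to 0$ yields an Eagon--Northcott (Koszul) resolution of the symmetric power of the quotient,
$$0\to\Lambda^c N^*_{X/\bP^N}\otimes S^{\ell_s-c}\tOm_{\bP^N_{|X}}\to\cdots\to N^*_{X/\bP^N}\otimes S^{\ell_s-1}\tOm_{\bP^N_{|X}}\to S^{\ell_s}\tOm_{\bP^N_{|X}}\to S^{\ell_s}\tOm_X\to 0,$$
with general term $E_s^i:=\Lambda^i N^*_{X/\bP^N}\otimes S^{\ell_s-i}\tOm_{\bP^N_{|X}}$ (padded by zeros when $i>\min(c,\ell_s)$). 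Setting $G=\cO_X(a)$, $F_s=S^{\ell_s}\tOm_X$ and $q=n-kc-1$, Lemma~\ref{lemmedroite} reduces the theorem to showing that $H^j(X,E_1^{i_1}\otimes\cdots\otimes E_k^{i_k}\otimes\cO_X(a))=0$ for all $j\leqslant q+i_1+\cdots+i_k$ and all multi-indices $(i_1,\dots,i_k)$.

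First I would make each factor explicit. Since $\tOm_{\bP^N_{|X}}=V\otimes\cO_X(-1)$ we have $S^{\ell_s-i_s}\tOm_{\bP^N_{|X}}=S^{\ell_s-i_s}V\otimes\cO_X(-(\ell_s-i_s))$, a trivial bundle up to a twist. Writing $E:=N_{X/\bP^N}\otimes\cO_X(-1)$, which is globally generated of rank $c$, and using $\Lambda^i N^*_{X/\bP^N}\cong\Lambda^{c-i}N_{X/\bP^N}\otimes\det N^*_{X/\bP^N}$ together with the adjunction identity $\det N_{X/\bP^N}=K_X\otimes\cO_X(N+1)$, a direct computation gives $E_s^{i_s}\cong S^{\ell_s-i_s}V\otimes\Lambda^{c-i_s}E\otimes K_X^{-1}\otimes\cO_X(-n-1-\ell_s)$. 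Tensoring over $s$ and by $\cO_X(a)$, the bundle $E_1^{i_1}\otimes\cdots\otimes E_k^{i_k}\otimes\cO_X(a)$ becomes a (trivial) multiplicity space tensored with $\bigotimes_s\Lambda^{c-i_s}E\otimes K_X^{-k}\otimes\cO_X\big(a-k(n+1)-\textstyle\sum_s\ell_s\big)$.

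The negative power $K_X^{-k}$ shows that Le Potier/Manivel cannot be applied directly, so the key move is to pass to the dual via Serre duality: the required vanishing in degree $j$ is equivalent to the vanishing of $H^{n-j}$ of the dual tensored with $K_X$. Using $\Lambda^{c-i_s}E^*\cong\Lambda^{i_s}E\otimes\det E^*$ and $\det E=K_X\otimes\cO_X(n+1)$, all the superfluous canonical and hyperplane twists cancel, and the Serre-dual bundle simplifies to $K_X\otimes\Lambda^{i_1}E\otimes\cdots\otimes\Lambda^{i_k}E\otimes\cO_X\big(\textstyle\sum_s\ell_s-a\big)$ (again up to a trivial factor). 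Here $E$ is globally generated and, crucially, $\cO_X(\sum_s\ell_s-a)$ is ample exactly when $a<\ell_1+\cdots+\ell_k$, which is the hypothesis of the theorem. Applying Theorem~\ref{Manivel} with $E_1=\cdots=E_k=E$ and $a_j=i_j$ yields vanishing in all degrees $>kc-(i_1+\cdots+i_k)$; since $n-j\geqslant kc+1-(i_1+\cdots+i_k)$ whenever $j\leqslant q+i_1+\cdots+i_k=n-kc-1+i_1+\cdots+i_k$, this is precisely the range we need.

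I expect the main obstacle to be bookkeeping rather than conceptual: namely, carrying the determinant and canonical-bundle twists through the two symmetric-power resolutions so that, after Serre duality, the power of $K_X$ collapses to $1$ and the hyperplane twist lands on exactly $\cO_X(\sum_s\ell_s-a)$, thereby matching Manivel's ampleness requirement to the stated numerical hypothesis. Two minor points remain to be checked: the indices $i_s=0$, where the factor $\Lambda^{i_s}E=\cO_X$ simply drops out and only lowers Manivel's threshold, so the vanishing still holds; and the degenerate cases $i_s>\ell_s$ (where $E_s^{i_s}=0$ and the cohomology vanishes trivially) and $c=0$ (where $X=\bP^N$).
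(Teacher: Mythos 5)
Your proposal is correct and follows essentially the same route as the paper: the symmetric-power Koszul resolutions of the tilde conormal sequence, the reduction via Lemma \ref{lemmedroite}, Serre duality, and the Ein--Lazarsfeld/Manivel vanishing theorem applied to the globally generated bundle $N_{X/\bP^N}\otimes\cO_X(-1)$ twisted by the ample line bundle $\cO_X(\ell_1+\cdots+\ell_k-a)$. The only cosmetic difference is that you pass through $\Lambda^{i}N^*\cong\Lambda^{c-i}N\otimes\det N^*$ and adjunction before dualizing, whereas the paper dualizes $\Lambda^{i}(N^*(1))$ directly; both land on the same bundle and the same numerical threshold.
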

\begin{proof}
Let $N:=N_{X/\bP^N}$. Recall the tilde conormal exact sequence
$$0\to N^*\to \tOm_{\bP^N_{|X}}\to \tOm_X\to 0.$$
Taking different symmetric powers we obtain $k$ exact sequences 
\begin{eqnarray*}
0\to \Lambda^{\ell_1}N^*\to \Lambda^{\ell_1-1}N^*\otimes \tOm_{\bP^N_{|X}}\to &\cdots& \to N^*\otimes S^{\ell_1-1}\tOm_{\bP^N_{|X}} \to S^{\ell_1}\tOm_{\bP^N_{|X}}\to S^{\ell_1}\tOm_X\to 0\\
0\to \Lambda^{\ell_2}N^*\to \Lambda^{\ell_2-1}N^*\otimes \tOm_{\bP^N_{|X}}\to &\cdots& \to N^*\otimes S^{\ell_2-1}\tOm_{\bP^N_{|X}} \to S^{\ell_2}\tOm_{\bP^N_{|X}}\to S^{\ell_2}\tOm_X\to 0\\
&\vdots&\\
0\to \Lambda^{\ell_k}N^*\to \Lambda^{\ell_k-1}N^*\otimes \tOm_{\bP^N_{|X}}\to &\cdots& \to N^*\otimes S^{\ell_k-1}\tOm_{\bP^N_{|X}} \to S^{\ell_k}\tOm_{\bP^N_{|X}}\to S^{\ell_k}\tOm_X\to 0
\end{eqnarray*}
For $1\leqslant p\leqslant k$, we let
\begin{eqnarray*}
E_p^i&=&\Lambda^{i}N^*\otimes S^{\ell_p-i}\tOm_{\bP^N_{|X}} =\Lambda^{i}N^*\otimes S^{\ell_p-i}V\otimes \cO_X(i-\ell_p).
\end{eqnarray*}
By Lemma \ref{lemmedroite} to prove that 
$$H^j(X,S^{\ell_1}\tOm_X\otimes \cdots \otimes S^{\ell_k}\tOm_X\otimes \cO_{X}(a))=0 \ \ {\rm for} \ j< n-k\cdot c,$$
it suffices to prove that 
$$H^j(X,E_1^{i_1}\otimes \cdots \otimes E_k^{i_k}\otimes \cO_X(a))=0 \ \ {\rm for} \ j< n-k\cdot c+i_1+\cdots +i_k.$$
We now prove this fact. We have: 
\begin{eqnarray*}
H^j\!\!\!\!\!\!\!&(&\!\!\!\!\!\!\!X,E_1^{i_1}\otimes \cdots \otimes E_k^{i_k}\otimes \cO_{X}(a))=H^j(X,\Lambda^{i_1}N^*\otimes S^{\ell_1-i_1}\tOm_{\bP^N_{|X}}\otimes \cdots \otimes \Lambda^{i_k}N^*\otimes S^{\ell_k-i}\tOm_{\bP^N_{|X}}\otimes \cO_{X}(a))\\
&=&H^j(X,\Lambda^{i_1}N^*\otimes \cdots \otimes \Lambda^{i_k}N^*\otimes S^{\ell_1-i_1}V\otimes \cdots \otimes S^{\ell_k-i_k}V\otimes \cO_X(i_1-\ell_1+\cdots +i_k-\ell_k+a))\\
&=&  S^{\ell_1-i_1}V\otimes \cdots \otimes S^{\ell_k-i_k}V\otimes H^j(X,\Lambda^{i_1}(N^*(1))\otimes \cdots \otimes \Lambda^{i_k}(N^*(1))\otimes\cO_X(a-\ell_1-\cdots -\ell_k))
\end{eqnarray*}
On the other hand using Serre duality,  we obtain
\begin{eqnarray*}
H^j\!\!\!\!\!\!\!&(&\!\!\!\!\!\!\! X,\Lambda^{i_1}(N^*(1))\otimes \cdots \otimes \Lambda^{i_k}(N^*(1))\otimes\cO_X(a-\ell_1-\cdots -\ell_k)\\
&=& H^{n-j}(X,\Lambda^{i_1}(N(-1))\otimes \cdots \otimes \Lambda^{i_k}(N(-1))\otimes\cO_X(\ell_1+\cdots +\ell_k-a)\otimes K_X)
\end{eqnarray*}
Since $\cO_X(\ell_1+\cdots +\ell_k-a)$ is ample and $N(-1)$ is a rank-$c$ globally generated vector bundle, we can apply  Theorem \ref{Manivel} to see that this last cohomology group vanishes if
$$n-j>k\cdot c -i_1-\cdots -i_k$$
or equivalently, if
$$j<n-k\cdot c +i_1+\cdots +i_k.$$
This is exactly what was needed to conclude the proof.
\end{proof}
We are in a position to deduce the announced generalization of Schneider's theorem.

\begin{theorem}\label{SchneiderOm}
Let $X\subseteq \bP^N$ be a smooth variety of dimension $n$ and codimension $c=N-n$. Consider an integer $k\geqslant 1$ and $k$ integers $\ell_1,\dots, \ell_k\geqslant 0$, and $a\in \mathbb{Z}$. If  $j< n-k\cdot c$ and $a<\ell_1+\cdots +\ell_k-\min\{j,k\}$, then 
$$H^j(X,S^{\ell_1}\Om_X\otimes \cdots \otimes S^{\ell_k}\Om_X\otimes \cO_{X}(a))=0.$$
\end{theorem}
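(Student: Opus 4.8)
The plan is to bootstrap from Theorem \ref{SchneidertOm}, which already supplies the desired vanishing for the modified bundle $\tOm_X$, and to pass to the genuine cotangent bundle $\Om_X$ by means of the Euler exact sequence $0\to\Om_X\to\tOm_X\to\cO_X\to 0$ together with the left-handed cohomological Lemma \ref{lemmegauche}.

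First I would produce, for each exponent $\ell_p$, a short resolution of $S^{\ell_p}\Om_X$ by symmetric powers of $\tOm_X$. The surjection $\tOm_X\to\cO_X$ is equivalently a nowhere-vanishing section $v$ of the dual bundle $\tOm_X^*$; contraction against $v$ gives a surjection $\iota_v:S^{\ell}\tOm_X\to S^{\ell-1}\tOm_X$ whose kernel is exactly $S^{\ell}\Om_X$ (the functional $v$ vanishes on $\Om_X$, so $S^{\ell}\Om_X$ is annihilated, and a rank count using $\binom{\ell+n}{n}=\binom{\ell+n-1}{n-1}+\binom{\ell+n-1}{n}$ shows this is the whole kernel). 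This yields, for every $\ell_p\geqslant 1$,
$$0\to S^{\ell_p}\Om_X\to S^{\ell_p}\tOm_X\to S^{\ell_p-1}\tOm_X\to 0,$$
a length-one exact sequence of the type required by Lemma \ref{lemmegauche}, with $E_p^0=S^{\ell_p}\tOm_X$ and $E_p^1=S^{\ell_p-1}\tOm_X$. When $\ell_p=0$ one simply takes $E_p^0=\cO_X$ and $E_p^1=0$.

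Next I would feed these $k$ sequences into Lemma \ref{lemmegauche} with $G=\cO_X(a)$ and $q=j$, so that $F_p=S^{\ell_p}\Om_X$. The hypothesis to verify is the vanishing of $H^{j'}(X,E_1^{i_1}\otimes\cdots\otimes E_k^{i_k}\otimes\cO_X(a))$ for all $j'\leqslant j-(i_1+\cdots+i_k)$, where each $i_p\in\{0,1\}$. Since $E_p^{i_p}=S^{\ell_p-i_p}\tOm_X$, this tensor product is precisely of the form handled by Theorem \ref{SchneidertOm}, which applies because $j'\leqslant j<n-k\cdot c$; it delivers the vanishing provided $a<\sum_p(\ell_p-i_p)=\ell_1+\cdots+\ell_k-(i_1+\cdots+i_k)$.

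The one point that needs care, and which is the source of the sharper bound $\min\{j,k\}$ rather than $k$, is that the hypothesis of Lemma \ref{lemmegauche} is vacuous whenever $j-(i_1+\cdots+i_k)<0$, i.e. whenever $i_1+\cdots+i_k>j$, since cohomology vanishes in negative degrees regardless of $a$. Consequently the only multi-indices $(i_1,\dots,i_k)$ imposing an actual constraint on $a$ are those with $i_1+\cdots+i_k\leqslant j$, and among these the largest possible value of $i_1+\cdots+i_k$ is $\min\{j,k\}$; hence the binding condition is exactly $a<\ell_1+\cdots+\ell_k-\min\{j,k\}$. Under this hypothesis all required vanishings hold, and Lemma \ref{lemmegauche} yields $H^j(X,S^{\ell_1}\Om_X\otimes\cdots\otimes S^{\ell_k}\Om_X\otimes\cO_X(a))=0$, as desired. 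I expect this bookkeeping, matching the vacuous range of the lemma to the precise value $\min\{j,k\}$, to be the only genuinely delicate step; the construction of the resolutions and the invocation of Theorem \ref{SchneidertOm} are routine.
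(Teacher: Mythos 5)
Your proposal is correct and follows essentially the same route as the paper: the same Euler-sequence resolutions $0\to S^{\ell_p}\Om_X\to S^{\ell_p}\tOm_X\to S^{\ell_p-1}\tOm_X\to 0$, fed into Lemma \ref{lemmegauche} with Theorem \ref{SchneidertOm} supplying the input vanishings, and the same observation that only multi-indices with $i_1+\cdots+i_k\leqslant\min\{j,k\}$ impose a constraint, which is exactly how the paper obtains the bound $a<\ell_1+\cdots+\ell_k-\min\{j,k\}$. The only difference is that you spell out the construction of the short exact sequences (via contraction and a rank count), which the paper takes for granted.
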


\begin{remark}
The particular case $k=1$ is precisely Schneider's result.
\end{remark}

\begin{proof}
Consider the $k$ short exact sequences gotten from the Euler exact sequence
\begin{eqnarray*}
0\to S^{\ell_1}\Om_X \to &S^{\ell_1}\tOm_X&\to S^{\ell_1-1}\tOm_X\to 0\\ 
0\to S^{\ell_2}\Om_X \to &S^{\ell_2}\tOm_X& \to S^{\ell_2-1}\tOm_X\to 0 \\
&\vdots &\\
0\to S^{\ell_k}\Om_X \to &S^{\ell_k}\tOm_X& \to S^{\ell_k-1}\tOm_X\to 0.
\end{eqnarray*}
Fix $j<n-k\cdot c$. By Lemma \ref{lemmegauche} it is now sufficient to check that for any $0\leqslant i_1,\dots,i_k \leqslant 1$, for all $0\leqslant i\leqslant j-i_1-\dots - i_k$  and for all $a< \ell_1+\cdots +\ell_k-\min\{j,k\}$
$$H^i(X,S^{\ell_1-i_1}\tOm_X\otimes \cdots \otimes S^{\ell_k-i_k}\tOm_X\otimes \cO_{X}(a))=0.$$
 This follows from Theorem $\ref{SchneidertOm}$ as soon as one observes that under these conditions, $i_1+\dots +i_k\leqslant \min\{j,k\}$. 

\end{proof}
 
\section{Application to Green-Griffiths  jet differentials}

Now we apply those vanishing results to Green-Griffiths jet differential bundle. The idea of converting a vanishing result for symmetric differential form bundles into a vanishing result for Green-Griffiths jet differential bundles is due to Diverio. In $\cite{Div08}$, Diverio uses a vanishing theorem due to Bruckmann and Rackwitz (see $\cite{B-R90}$) concerning symmetric differential forms on complete intersection varieties. 
We proceed along the lines of \cite{Div08}.

We start by recalling a cohomological lemma (see \cite{Div08}).

\begin{lemma}\label{DiverioLemma}
Let $E\to X$ be a holomorphic vector bundle with a filtration $\{0\}= E_r\subset \cdots \subset E_1 \subset E_0=E$. If $H^q(X,{\Gr}^{\bullet}E)=0$, then $H^q(X,E)=0$.
\end{lemma}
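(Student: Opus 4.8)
The plan is to exploit the short exact sequences provided by the successive steps of the filtration, together with the associated long exact sequences in cohomology, and to run a descending induction on the filtration index.

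First I would record the elementary observation underlying the hypothesis. Since cohomology commutes with finite direct sums, and since by definition
$$\Gr^{\bullet}E=\bigoplus_{i=0}^{r-1}E_i/E_{i+1},$$
the assumption $H^q(X,\Gr^{\bullet}E)=0$ is equivalent to the vanishing $H^q(X,E_i/E_{i+1})=0$ for every $0\leqslant i\leqslant r-1$. This reduces the statement to controlling the cohomology of the individual graded pieces.

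Next I would set up the induction. For each index $i$, the inclusion $E_{i+1}\subset E_i$ fits into the short exact sequence of vector bundles
$$0\to E_{i+1}\to E_i\to E_i/E_{i+1}\to 0.$$
Taking the associated long exact sequence in cohomology yields the exact three-term portion
$$H^q(X,E_{i+1})\to H^q(X,E_i)\to H^q(X,E_i/E_{i+1}).$$
I would then argue by descending induction on $i$, starting from $E_r=\{0\}$, for which $H^q(X,E_r)=0$ trivially. Assuming $H^q(X,E_{i+1})=0$, the left-hand group above vanishes by the inductive hypothesis and the right-hand group vanishes by the reduction of the previous paragraph; exactness then forces $H^q(X,E_i)=0$. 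Carrying the induction down to $i=0$ gives $H^q(X,E)=H^q(X,E_0)=0$, as desired.

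There is no genuine obstacle in this argument: the only point requiring a moment's care is the identification of the hypothesis on $\Gr^{\bullet}E$ with the simultaneous vanishing on all graded quotients $E_i/E_{i+1}$, after which the sandwiching of $H^q(X,E_i)$ between two vanishing groups in the long exact sequence closes the induction immediately.
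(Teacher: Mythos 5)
Your proof is correct. The paper itself states this lemma without proof (it simply recalls it from Diverio's article), and your argument --- reducing the hypothesis to the vanishing of $H^q(X,E_i/E_{i+1})$ for each $i$ and then running a descending induction on $i$ via the long exact sequence attached to $0\to E_{i+1}\to E_i\to E_i/E_{i+1}\to 0$ --- is exactly the standard argument that the cited reference uses.
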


Combining this lemma with Theorem \ref{SchneiderOm} one derives our result.

\begin{corollary}\label{VanishingGGjets}
Let $X\subseteq \bP^N$ be a smooth variety of dimension $n$ and codimension $c$. Let $a\in \mathbb{Z}$.
If $j< n-k\cdot c$ and $a<\frac{m}{k}-\min\{j,k\}$, then
$$H^j(X,E^{GG}_{k,m}\Om_X\otimes \cO_X(a))=0.$$
\end{corollary}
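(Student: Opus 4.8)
The plan is to reduce the statement to Theorem \ref{SchneiderOm} by exploiting the filtration on $E^{GG}_{k,m}\Om_X$ recalled in Section \ref{prelimGGjets}, together with Lemma \ref{DiverioLemma}. First I would tensor the filtration $F^{\bullet}$ of $E^{GG}_{k,m}\Om_X$ by $\cO_X(a)$, which produces a filtration of $E^{GG}_{k,m}\Om_X\otimes\cO_X(a)$ whose associated graded bundle is
$$\Gr^{\bullet}\big(E^{GG}_{k,m}\Om_X\big)\otimes\cO_X(a)=\bigoplus_{\ell_1+2\ell_2+\cdots+k\ell_k=m}S^{\ell_1}\Om_X\otimes\cdots\otimes S^{\ell_k}\Om_X\otimes\cO_X(a).$$
By Lemma \ref{DiverioLemma} it then suffices to show that $H^j$ of this graded bundle vanishes, i.e. that for every tuple $(\ell_1,\dots,\ell_k)$ of non-negative integers satisfying $\ell_1+2\ell_2+\cdots+k\ell_k=m$ one has
$$H^j(X,S^{\ell_1}\Om_X\otimes\cdots\otimes S^{\ell_k}\Om_X\otimes\cO_X(a))=0.$$

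Next I would apply Theorem \ref{SchneiderOm} to each such summand. The hypothesis $j<n-k\cdot c$ is already assumed, so the only remaining condition to verify is $a<\ell_1+\cdots+\ell_k-\min\{j,k\}$ for every admissible tuple. Since the quantity $\min\{j,k\}$ is exactly the one appearing in the corollary, this amounts to checking that $\frac{m}{k}\leqslant \ell_1+\cdots+\ell_k$ for every tuple with $\ell_1+2\ell_2+\cdots+k\ell_k=m$.

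This last inequality is the only genuine computation, and it is elementary: from the weighting $s\leqslant k$ one gets
$$m=\sum_{s=1}^{k}s\,\ell_s\leqslant k\sum_{s=1}^{k}\ell_s=k\,(\ell_1+\cdots+\ell_k),$$
so that $\ell_1+\cdots+\ell_k\geqslant \frac{m}{k}$. Combining this with the assumption $a<\frac{m}{k}-\min\{j,k\}$ yields $a<\ell_1+\cdots+\ell_k-\min\{j,k\}$, which is precisely the arithmetic hypothesis required to invoke Theorem \ref{SchneiderOm}, and the proof is complete. I do not expect a real obstacle here: the whole argument is the reduction via Lemma \ref{DiverioLemma} followed by the bound $\sum_s s\ell_s\leqslant k\sum_s\ell_s$. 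The only point needing a little care is that the strict inequality on $a$ must transfer correctly through the estimate $\frac{m}{k}\leqslant \sum_s\ell_s$, which it does since $\min\{j,k\}$ is left unchanged and appears identically on both sides.
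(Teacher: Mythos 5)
Your proposal is correct and follows essentially the same route as the paper: tensor the Green--Griffiths filtration by $\cO_X(a)$, reduce to the graded pieces via Lemma \ref{DiverioLemma}, and verify the arithmetic hypothesis of Theorem \ref{SchneiderOm} using the bound $m=\sum_s s\,\ell_s\leqslant k\sum_s\ell_s$. Nothing is missing.
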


\begin{proof}
Recall that the Green-Griffiths bundle admits a filtration  whose graded bundle is given by
$${\Gr}^{\bullet}E^{GG}_{k,m}\Om_X=\bigoplus_{\ell_1+2\ell_2+\cdots +k\ell_k=m}S^{\ell_1}\Om_X\otimes S^{\ell_2}\Om_X \otimes \cdots \otimes S^{\ell_k}\Om_X.$$
By twisting everything by $\cO_X(a)$, we obtain a filtration for $E^{GG}_{k,m}\Om_X\otimes \cO_X(a)$ whose graded bundle is given by 
$${\Gr}^{\bullet}E^{GG}_{k,m}\Om_X\otimes \cO_X(a)=\bigoplus_{\ell_1+2\ell_2+\cdots +k\ell_k=m}S^{\ell_1}\Om_X\otimes S^{\ell_2}\Om_X \otimes \cdots \otimes S^{\ell_k}\Om_X\otimes \cO_X(a).$$
 By Lemma $\ref{DiverioLemma}$, we only have to prove that 
$$H^j(X,S^{\ell_1}\Om_X\otimes S^{\ell_2}\Om_X \otimes \cdots \otimes S^{\ell_k}\Om_X\otimes \cO_X(a))=0$$
for all $j< n-k\cdot c$ and $\ell_1+\cdots +k\ell_k=m$. 
The hypothesis then gives us
$$a+\min\{j,k\}<\frac{m}{k}=\frac{\ell_1+\cdots +k\ell_k}{k}\leqslant \frac{k\ell_1+\cdots +k\ell_k}{k}=\ell_1+\cdots+\ell_k.$$
Thus we can conclude by applying Theorem $\ref{SchneiderOm}$.
\end{proof}

\begin{remark}
When $k=1$, this is exactly Schneider's theorem. When $X$ is a complete intersection, $a=0$ and $j=0$ then this is precisely Diverio's theorem.
\end{remark}

\begin{remark}\label{rknonvanishing}
Assertion 2 in Corollary \ref{VanishingGGjets} is optimal in $k$. Let $X$ be a complete intersection variety in $\bP^N$ of dimension $n$ and codimension $c=N-n$, denotes its multidegree by $(d_1,\dots, d_c)$. It is possible to prove that for any $k\geqslant \frac{n}{c}$, if the $d_i$ are big enough, then $0\neq H^0(X,E^{GG}_{k,m}\Om_X\otimes \cO_X(a))$ for  $m\gg 0$. In fact one obtains a stronger result: under the same hypothesis, it is possible to prove that  $0\neq H^0(X,E_{k,m}\Om_X\otimes \cO_X(a))$, where $E_{k,m}\Om_X\subseteq E_{k,m}^{GG}\Om_X$ denotes the Demailly-Semple jet differential bundle. We refer to $\cite{Bro11}$ for a proof.
\end{remark}



\section{Application to Pacienza-Rousseau's generalized Green-Griffiths jet differentials}

In \cite{P-R08}, Pacienza and Rousseau generalized Green-Griffiths jet differential bundles, $E^{GG}_{p,k,m}$. And, among other things, they generalized Diverio's vanishing theorem to those bundles.

\begin{theorem}[Pacienza-Rousseau \cite{P-R08} Theorem 5.1]\label{P-RVanishing}
Let $X\subseteq \bP^N$ ba a smooth complete intersection. Then

$$H^0(X,E_{p,k,m}^{GG})=0,$$

for all $m\geqslant 1$ and $k$ such that 
$$\binom{k+p}{p}-1<\frac{\dim (X)}{\codim (X)}.$$
\end{theorem}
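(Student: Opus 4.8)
The plan is to follow exactly the strategy used for the Green-Griffiths case in Corollary \ref{VanishingGGjets}, replacing the filtration of $E^{GG}_{k,m}\Om_X$ with the filtration of $E^{GG}_{p,k,m}\Om_X$ recalled at the end of Section 4. First I would invoke Lemma \ref{DiverioLemma}: twisting the given filtration by $\cO_X(a)$, it suffices to prove the vanishing of $H^0$ (and more generally $H^j$, to get a statement parallel to the other corollaries) for each graded piece
$$\bigotimes_{\alpha\in I_1}S^{q_{\alpha}^1}\Om_X\otimes \cdots \otimes \bigotimes_{\alpha\in I_k}S^{q_{\alpha}^k}\Om_X\otimes \cO_X(a),$$
subject to the constraint $\sum_{\ell=1}^k\sum_{\alpha\in I_{\ell}}q_{\alpha}^{\ell}\,\alpha=(m,\dots,m)$.

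The key observation is to count the total number of symmetric factors. Each graded piece is a tensor product of $S^{q}\Om_X$-type bundles, and the total number of such tensor factors appearing is at most $\sum_{\ell=1}^{k}|I_{\ell}|=\sum_{\ell=1}^{k}\binom{\ell+p-1}{p-1}=\binom{k+p}{p}-1$. Writing $K:=\binom{k+p}{p}-1$ for this bound, each graded term is of the form $S^{\ell_1}\Om_X\otimes\cdots\otimes S^{\ell_{K'}}\Om_X\otimes\cO_X(a)$ with $K'\leqslant K$ tensor factors (allowing $\ell_i=0$ to pad up to exactly $K$ factors costs nothing, since $S^0\Om_X=\cO_X$). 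This is precisely the shape of bundle handled by Theorem \ref{SchneiderOm}, now with $k$ there replaced by $K$.

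Next I would produce the two inequalities needed to apply Theorem \ref{SchneiderOm} with $K$ factors. The cohomological range condition becomes $j<n-K\cdot c$, which under the stated hypothesis $\binom{k+p}{p}-1<\dim(X)/\codim(X)=n/c$ reads $K<n/c$, i.e. $n-K\cdot c>0$, so that at least $H^0$ is covered (for the $H^0$ statement one takes $j=0$). For the twisting condition one must check $a<\sum\ell_i-\min\{j,K\}$; since $a=0$ and $j=0$ in the Pacienza-Rousseau statement, this reduces to $0<\sum\ell_i$, which holds because the constraint $\sum q_{\alpha}^{\ell}\alpha=(m,\dots,m)$ with $m\geqslant 1$ forces at least one exponent $q_{\alpha}^{\ell}$ to be positive, hence $\sum\ell_i\geqslant 1$. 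Applying Theorem \ref{SchneiderOm} to each graded piece then gives the vanishing, and Lemma \ref{DiverioLemma} lifts it to $E^{GG}_{p,k,m}\Om_X$.

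The main obstacle I anticipate is purely combinatorial rather than cohomological: verifying that the number of tensor factors in every graded piece is genuinely bounded by $\binom{k+p}{p}-1$, uniformly over all admissible multidegree data $(q_{\alpha}^{\ell})$. This requires a careful reading of the index sets $I_{\ell}$ and the counting identity $\sum_{\ell=1}^{k}\binom{\ell+p-1}{p-1}=\binom{k+p}{p}-1$ (a hockey-stick summation). Once this count is in hand, and once one checks that padding with trivial factors $S^0\Om_X=\cO_X$ lets every piece be treated as having exactly $K$ factors, the rest is a direct transcription of the argument in Corollary \ref{VanishingGGjets}.
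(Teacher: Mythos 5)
Your proposal is correct and follows essentially the same route the paper takes: the paper only quotes Theorem~\ref{P-RVanishing} from \cite{P-R08}, but its own generalization, Corollary~\ref{VanishingPRGGjets}, is proved exactly as you describe --- filter $E^{GG}_{p,k,m}\Om_X$, count the $\binom{k+p}{p}-1$ symmetric factors in each graded piece, and apply Lemma~\ref{DiverioLemma} together with Theorem~\ref{SchneiderOm}. Your verification of the twisting condition via $m\geqslant 1 \Rightarrow \sum \ell_i\geqslant 1$ is a correct specialization of the paper's componentwise estimate $\frac{m}{k}\leqslant \sum_{\ell,\alpha} q_{\alpha}^{\ell}$, and the padding by $S^0\Om_X=\cO_X$ is harmless since Theorem~\ref{SchneiderOm} allows $\ell_i=0$.
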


There proof rests on the same idea than the proof of Diverio's Theorem \ref{DiverioVanishing}. Therefore it is not surprising that one can prove a more general statement using Theorem \ref{SchneiderOm}.

\begin{corollary}\label{VanishingPRGGjets}
Let $X\subseteq \bP^N$ be a smooth variety of dimension $n$ and codimension $c$. Let $a\in \mathbb{Z}$.
If $0\leqslant j< N-\binom{k+p}{p}\cdot c$ and $ a< \frac{m}{k} -\min\left\{j,\binom{k+p}{p}-1\right\}$, then
$$H^j(X,E_{p,k,m}^{GG}\otimes \cO_X(a))=0.$$
\end{corollary}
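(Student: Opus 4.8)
The plan is to mimic exactly the structure of the proof of Corollary \ref{VanishingGGjets}, replacing the Green-Griffiths filtration by the Pacienza-Rousseau filtration recalled at the end of Section 4. First I would invoke Lemma \ref{DiverioLemma}: since $E_{p,k,m}^{GG}\otimes \cO_X(a)$ admits a filtration whose graded pieces are the twisted tensor products
$$\bigotimes_{\alpha\in I_1}S^{q_{\alpha}^1}\Om_X\otimes \cdots \otimes \bigotimes_{\alpha\in I_k}S^{q_{\alpha}^k}\Om_X\otimes \cO_X(a),$$
subject to the weight condition $\sum_{\ell=1}^k\sum_{\alpha\in I_{\ell}}q_{\alpha}^{\ell}\alpha=(m,\dots,m)$, it suffices to prove the vanishing of $H^j$ for each such graded piece. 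Each piece is a tensor product of $K:=\sum_{\ell=1}^k |I_\ell|=\binom{k+p}{p}-1$ symmetric powers of $\Om_X$ (the number of nonzero multi-indices $\alpha$ with $1\leqslant |\alpha|\leqslant k$), so it has precisely the shape to which Theorem \ref{SchneiderOm} applies with that value of $K$ in place of $k$.

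The heart of the argument is then a bookkeeping computation translating the hypothesis on $a$ into the hypothesis required by Theorem \ref{SchneiderOm}. Writing $L:=\sum_{\ell,\alpha} q_{\alpha}^{\ell}$ for the total sum of the symmetric exponents appearing in a fixed graded piece, I must check two things: that $j< n-K\cdot c$, and that $a< L-\min\{j,K\}$. The dimension inequality $j<N-\binom{k+p}{p}c$ in the statement looks suspicious since Theorem \ref{SchneiderOm} demands $j<n-Kc=n-(\binom{k+p}{p}-1)c$; I would reconcile this by noting $N-\binom{k+p}{p}c=n+c-\binom{k+p}{p}c=n-(\binom{k+p}{p}-1)c$, so the two conditions coincide. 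For the second inequality, the key estimate is a lower bound $L\geqslant m/k$. This comes from the weight relation: summing the first coordinate of $\sum q_{\alpha}^{\ell}\alpha=(m,\dots,m)$ gives $\sum q_{\alpha}^{\ell}\alpha_1=m$, and since each $|\alpha|\leqslant k$ one has (after symmetrizing over the $p$ coordinates, or arguing coordinate-by-coordinate) that $kL=k\sum q_{\alpha}^{\ell}\geqslant \sum q_{\alpha}^{\ell}|\alpha|\geqslant m$, whence $L\geqslant m/k$. Combined with $a<\frac{m}{k}-\min\{j,K\}\leqslant L-\min\{j,K\}$, this supplies exactly the hypothesis of Theorem \ref{SchneiderOm} with $\ell_1+\cdots+\ell_K=L$.

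The main obstacle I anticipate is the estimate $L\geqslant m/k$ and the precise identification $K=\binom{k+p}{p}-1$. The count of symmetric factors $K=\sum_\ell |I_\ell|=\sum_{\ell=1}^k \binom{\ell+p-1}{p-1}=\binom{k+p}{p}-1$ must be read off correctly from the definition of the $I_\ell$, and one must be careful that the weight condition is vector-valued in $(\bC^*)^p$: the bound $\sum q_{\alpha}^{\ell}|\alpha|\geqslant m$ should be obtained by summing all $p$ coordinates of the relation, giving $\sum q_{\alpha}^{\ell}(\alpha_1+\cdots+\alpha_p)=pm$, i.e. $\sum q_{\alpha}^{\ell}|\alpha|=pm$, and then dividing by $p$ the inequality $|\alpha|\leqslant k$ yields $kL\geqslant \frac{1}{p}\cdot pm=m$. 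Once these combinatorial identities are pinned down, everything else is a direct application of the machinery already built, exactly parallel to Corollary \ref{VanishingGGjets}.
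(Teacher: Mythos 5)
Your proposal is correct and follows essentially the same route as the paper's proof: reduce to the graded pieces of the Pacienza--Rousseau filtration via Lemma \ref{DiverioLemma}, count the $\binom{k+p}{p}-1$ symmetric factors, and deduce $\sum_{\ell,\alpha} q_{\alpha}^{\ell}\geqslant \frac{m}{k}$ from the vector-valued weight condition in order to apply Theorem \ref{SchneiderOm}. Your explicit reconciliation of $N-\binom{k+p}{p}c$ with $n-\bigl(\binom{k+p}{p}-1\bigr)c$ is a detail the paper leaves implicit, but otherwise the arguments coincide.
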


\begin{proof}
Recall (\cite{P-R08} Remark $2.6$) that there is a filtration for $E^{GG}_{p,k,m}\Om_X\otimes \cO_X(a)$ such that the graded terms are of the form 

\begin{eqnarray}
\bigotimes_{\alpha \in I_1}S^{q_{\alpha}^1}\Om_X \cdots \bigotimes_{\alpha \in I_{k}} S^{q_{\alpha}^k}\Om_X\otimes \cO_X(a),\label{PRgraduation}  
\end{eqnarray} 
where $I_{\ell}:=\{\alpha=(\alpha_1,\dots, \alpha_p)\  /\ \alpha_1+\cdots + \alpha_p=\ell\}$ and where 
$\sum_{\ell=1}^k\sum_{\alpha\in I_{\ell}} q_{\alpha}^{\ell}\alpha = (m,\dots , m).$\\
First observe that, 
$|I_1|+\cdots + |I_k|=\binom{k+p}{p}-1.$ 
Therefore there are only $\binom{k+p}{p}-1$ symmetric powers in expression ($\ref{PRgraduation}$). We apply Theorem $\ref{SchneiderOm}$, with the above observation, we just have to prove that under our hypothesis 
$$a<\sum_{\ell=1}^k\sum_{\alpha\in I_{\ell}} q_{\alpha}^{\ell} -\min\left\{j,\binom{k+p}{p}-1\right\}.$$
But this follows at once from 
\begin{eqnarray*}
\frac{(m,\dots , m)}{k}&=&\sum_{\ell=1}^k\sum_{\alpha\in I_{\ell}} q_{\alpha}^{\ell}\frac{\alpha}{k}\leq \sum_{\ell=1}^k\sum_{\alpha\in I_{\ell}} q_{\alpha}^{\ell}\frac{(k,\dots,k)}{k}=\left(\sum_{\ell=1}^k\sum_{\alpha\in I_{\ell}} q_{\alpha}^{\ell},\dots,\sum_{\ell=1}^k\sum_{\alpha\in I_{\ell}} q_{\alpha}^{\ell}\right).
\end{eqnarray*}
\end{proof}

\begin{remark}
Recall Hartshorne's conjecture for complete intersection variety (see \cite{Har74}): {\it if $X$ is a nonsingular subvariety of dimension $n$ of $\bP^N$, and if $n>\frac{2}{3}N$, then $X$ is a complete intersection.}
In view of this conjecture, our vanishing results are not surprising, at least in low codimension. It just shows that one can not use naively Diverio's vanishing theorem nor Pacienza-Rousseau's vanishing theorem to distinguish  a complete intersection variety from a variety which is not complete intersection.
\end{remark}


\section{A further generalization}

One can generalize Theorem \ref{SchneidertOm} and \ref{SchneiderOm} even further if one uses the whole strength of Manivel's results.

\begin{theorem}[Manivel \cite{Man97} Theorem A]
Let $E$ be a holomorphic vector bundle of rank $e$, and $L$ a line bundle on a smooth projective complex variety $X$ of dimension $n$. Suppose that $E$ is ample and $L$ is nef, or that $E$ is nef and $L$ ample. Then, for any sequences of integers $k_1, \dots, k_{\ell}$ and $j_1,\dots , j_m$,
$$H^{p,q}(X,S^{k_1}E\otimes \cdots \otimes S^{k_{\ell}}E\otimes \Lambda^{j_1}E\otimes \cdots \otimes \Lambda^{j_m}E\otimes (\det E)^{\ell+n-p}\otimes L)=0$$
as soon as $p+q>n+\sum_{s=1}^m(e-j_s).$
\end{theorem}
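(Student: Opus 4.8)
The statement is quoted from Manivel, so I will sketch the flag-bundle strategy that underlies such vanishing theorems. The guiding principle is to \emph{linearize} every Schur factor by passing to an iterated flag bundle, where each tensor factor becomes the direct image of a single line bundle. Concretely, I would introduce one projectivization $\bP(E^*)$ for each symmetric factor and one Grassmann bundle $\Gr_{j_s}(E)$ for each exterior factor, and form their fibre product over $X$,
$$\rho\colon Y:=\bP(E^*)\times_X\cdots\times_X\bP(E^*)\times_X\Gr_{j_1}(E)\times_X\cdots\times_X\Gr_{j_m}(E)\longrightarrow X,$$
equipped with the tautological line bundle $\cO(1)$ on each projective factor and the determinant of the tautological quotient on each Grassmann factor. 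There is a line bundle $\mathcal{M}$ on $Y$, built as an external product of the powers $\cO(k_i)$ and the determinant bundles, whose fibrewise cohomology along $\rho$ is, by the Borel--Weil theorem applied on each fibre, concentrated in degree $0$ and equal to the target tensor product; that is, $\rho_*\mathcal{M}=S^{k_1}E\otimes\cdots\otimes\Lambda^{j_m}E$ while $R^{>0}\rho_*\mathcal{M}=0$.

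The second step is to feed the $(\det E)^{\ell+n-p}$ twist and the Hodge degree $p$ into the relative geometry. The relative canonical bundle $K_{Y/X}$ is a product of the tautological line bundles tensored with a fixed power of $\rho^*\det E$, and this is precisely where the exponent $\ell+n-p$ is calibrated: the $\ell$ accounts for the projective factors, while the $n-p$ reflects the interaction with the Hodge piece. Using the relative Euler and tautological sequences, together with the Hodge filtration on $\Omega_Y$ and its associated graded, I would exhibit $\rho^*\Om_X^p$, suitably twisted by tautological bundles, as a graded summand of $\Om_Y^P$ for an appropriate total degree $P$. Since $R^{>0}\rho_*\mathcal{M}=0$, the Leray spectral sequence degenerates and identifies the cohomology on $X$ with the cohomology of a single line bundle on $Y$, reducing the claim to a Nakano-type statement $H^{P,q}(Y,\mathcal{L})=0$ for one line bundle $\mathcal{L}$.

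The third step is the positivity bookkeeping. Ampleness of $E$ forces every tautological line bundle on $Y$ to be positive, nefness of $L$ keeps $\rho^*L$ nef, and the engineered $\det E$ power guarantees that $\mathcal{L}$ is ample (and nef in the dual hypothesis). I would then apply the Akizuki--Kodaira--Nakano vanishing theorem on $Y$, of dimension $N=n+\dim(\text{fibre})$, which yields $H^{P,Q}(Y,\mathcal{L})=0$ for $P+Q>N$; translating $P$ and $Q$ back through the Hodge shift and the fibre dimension produces exactly the bound $p+q>n+\sum_s(e-j_s)$, the exterior factors contributing the defects $e-j_s$ through the codimensions appearing in the Bott calculation while the symmetric factors contribute nothing.

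The hard part is the bookkeeping tying the second and third steps together: one must track how the Hodge filtration on $\Om_Y$ meshes with the relative tautological sequences and the Borel--Weil--Bott shift, so that the naive Nakano bound $P+Q>N$ collapses to the sharp, asymmetric bound $p+q>n+\sum_s(e-j_s)$. In addition, the nef-$E$/ample-$L$ case is genuinely more delicate than the ample-$E$/nef-$L$ case and would require a limiting argument, approximating the nef bundle by the ample twists $E\otimes A^{\varepsilon}$ and passing to the limit, since Nakano vanishing is not available directly in the purely nef regime.
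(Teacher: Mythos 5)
The paper does not prove this statement at all: it is quoted verbatim from Manivel's article (Theorem A of \cite{Man97}) and used as a black box, so there is no in-paper argument to compare yours against. Judged on its own terms, your sketch follows the right general tradition (the Le Potier--Manivel method of linearizing on flag bundles, pushing forward a line bundle, and invoking Nakano-type vanishing upstairs), but it contains a genuine gap at its central step. You claim to ``exhibit $\rho^*\Om_X^p$, suitably twisted by tautological bundles, as a graded summand of $\Om_Y^P$.'' That is not available: the exact sequence $0\to\rho^*\Om_X\to\Om_Y\to\Om_{Y/X}\to 0$ induces only a \emph{filtration} on $\Om_Y^P$ whose graded pieces are $\rho^*\Om_X^i\otimes\Om_{Y/X}^{P-i}$, and this filtration does not split. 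Consequently you cannot simply isolate the piece $i=p$ and deduce its cohomology vanishing from $H^{P,Q}(Y,\mathcal{L})=0$. This is exactly the difficulty that the Borel--Le Potier spectral sequence is designed to handle: one must analyze the spectral sequence attached to that filtration and show that the relevant differentials vanish (or that enough of the surrounding graded pieces have vanishing cohomology to trap the desired one). That analysis, together with the calibration of the twist $(\det E)^{\ell+n-p}$ against the relative canonical bundle $K_{Y/X}$, is the actual content of Manivel's proof; your sketch defers both to ``bookkeeping,'' which is precisely where the theorem lives and where the asymmetric bound $p+q>n+\sum_s(e-j_s)$ is produced.

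Two smaller points. First, the nef-$E$/ample-$L$ case cannot be obtained by naively twisting $E$ by fractional powers of an ample bundle and ``passing to the limit,'' since cohomology does not vary continuously; the standard device is to work with honest finite covers or with $S^{Nk_i}$-type rescalings so that the perturbed bundles remain genuine vector bundles, and one must check the numerical hypotheses are preserved. Second, be careful with the projectivization convention: for $\rho_*\cO(k)=S^kE$ one needs $\bP(E)$ in the Grothendieck convention (one-dimensional quotients), i.e.\ what you are calling $\bP(E^*)$ must be set up so that the tautological quotient line bundle has the correct direct image; with the wrong convention the symmetric powers of $E^*$ appear instead.
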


Redoing the proof of Theorem \ref{SchneidertOm} and \ref{SchneiderOm}, and considering moreover exterior powers of $\tOm_X$ and $\Om_X$ one obtains the following.

\begin{theorem}
Let $X\subseteq \bP^N$ be a smooth variety of dimension $n$ and codimension $c=N-n$. Consider integers $j,k,p,r\geq 0$, integers $\ell_1,\dots, \ell_k, m_1,\dots, m_r\geq 0$, and $a,q\in \mathbb{Z}$. Let $\ell=\sum_{i=1}^k\ell_i$ and $m=\sum_{i=1}^rm_i$. 
\begin{enumerate}
\item If $j+p< n-kc$ and $K_X^{-q-p-r}\otimes \cO_{X}(\ell+m-(n+1)(p+r)-a)$ is ample, then 
$$H^{p,j}(X,S^{\ell_1}\tOm_X\otimes \cdots \otimes S^{\ell_k}\tOm_X\otimes \Lambda^{m_1}\tOm_X\otimes\cdots \otimes \Lambda^{m_r}\tOm_X \otimes K_X^{q} \otimes \cO_{X}(a))=0.$$
\item If $j+p< n-kc$ and $K_X^{-q-p-r}\otimes \cO_{X}(\ell+m-(n+1)(p+r)-a-\min\{j,k+m\})$ is ample, then
$$H^{p,j}(X,S^{\ell_1}\Om_X\otimes \cdots \otimes S^{\ell_k}\Om_X\otimes \Lambda^{m_1}\Om_X\otimes\cdots \otimes \Lambda^{m_r}\Om_X \otimes K_X^{q} \otimes \cO_{X}(a))=0.$$
\end{enumerate}
\end{theorem}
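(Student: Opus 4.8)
The plan is to mirror the structure of the proofs of Theorem \ref{SchneidertOm} and Theorem \ref{SchneiderOm}, but replacing the Ein--Lazarsfeld/Manivel vanishing (Theorem \ref{Manivel}) with the stronger Manivel Theorem A quoted just above, which controls Dolbeault cohomology $H^{p,q}$ of arbitrary Schur-type combinations of symmetric and exterior powers. The two statements (1) and (2) parallel exactly the $\tOm$-version and the $\Om$-version, so I would prove (1) first by resolving everything on $\bP^N_{|X}$ via the tilde conormal exact sequence, and then deduce (2) from (1) through the Euler exact sequence and Lemma \ref{lemmegauche}.

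For part (1), I would start from the tilde conormal exact sequence $0\to N^*\to \tOm_{\bP^N_{|X}}\to \tOm_X\to 0$ and take symmetric powers to obtain, for each factor $S^{\ell_i}\tOm_X$, a long exact Koszul-type resolution with terms $\Lambda^{t}N^*\otimes S^{\ell_i-t}\tOm_{\bP^N_{|X}}$; I would do the same with the exterior powers $\Lambda^{m_i}\tOm_X$, whose resolving terms involve $\Lambda^{t}N^*\otimes \Lambda^{m_i-t}\tOm_{\bP^N_{|X}}$. Applying Lemma \ref{lemmedroite} across all $k+r$ sequences reduces the vanishing to showing $H^{p,j}$ vanishes for the tensor product of these resolving terms (twisted by $K_X^q\otimes\cO_X(a)$) whenever $j< n-kc+(\text{sum of resolution indices})$. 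Since $\tOm_{\bP^N_{|X}}=V\otimes\cO_X(-1)$, each such term splits off trivial factors $V$ and collapses, after Serre duality, to a cohomology group of the form $H^{n-j,\,?}$ of a product of $\Lambda^{\bullet}(N(-1))$'s tensored with a power of $\cO_X$ and of $K_X$; here $N(-1)$ is the rank-$c$ globally generated (hence nef) bundle, and the line-bundle twist is arranged to be ample by the hypothesis on $K_X^{-q-p-r}\otimes\cO_X(\ell+m-(n+1)(p+r)-a)$. At that point Manivel's Theorem A applies, the factor $(\det E)^{\ell+n-p}$ being matched by the appropriate power of $\det(N(-1))$ absorbed into the twist, and the bound $p+q>n+\sum(e-j_s)$ translating precisely into the stated range $j+p<n-kc$ once one accounts for the resolution indices.

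For part (2), I would run the Euler exact sequence $0\to\Om_X\to\tOm_X\to\cO_X\to 0$ to produce, for each of the $k$ symmetric and $r$ exterior factors, a short exact sequence relating the $\Om_X$-version to the $\tOm_X$-version; these feed into Lemma \ref{lemmegauche}, reducing part (2) to part (1) at the cost of dropping each index $i_s\in\{0,1\}$, which contributes the correction $\min\{j,k+m\}$ (there being $k+r$ such factors, but the relevant count combines the $k$ symmetric and the exterior contributions into the combinatorial bound appearing in the exponent shift). The key bookkeeping is to verify that the worst-case total of the dropped indices is bounded by $\min\{j,k+m\}$ under the constraint $i_1+\cdots\leqslant j$, exactly as in the passage from Theorem \ref{SchneidertOm} to Theorem \ref{SchneiderOm}.

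The main obstacle I anticipate is purely the bookkeeping of the numerical invariants: matching the exponent $\ell+m-(n+1)(p+r)-a$ and the power $K_X^{-q-p-r}$ against the precise shape $(\det E)^{\ell+n-p}\otimes L$ demanded by Manivel's theorem, keeping track of how the Dolbeault degree $p$ interacts with the Serre-duality flip $H^{p,j}\leftrightarrow H^{n-p,n-j}$, and confirming that after the dualization the resulting twist is genuinely ample (nef-times-ample, as Theorem A allows) rather than merely nef. The homological machinery (Lemmas \ref{lemmedroite} and \ref{lemmegauche}) and the vanishing input (Manivel Theorem A) are already in hand, so no new idea beyond the two previous proofs is required; the difficulty is entirely in ensuring that the adjustment terms $(n+1)(p+r)$ and $\min\{j,k+m\}$ come out exactly right, which is why I would treat the $\tOm$-case carefully first and only then specialize.
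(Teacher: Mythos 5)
Your overall strategy is the one the paper intends (the paper itself offers no detailed proof, saying only to redo the arguments of Theorems \ref{SchneidertOm} and \ref{SchneiderOm} with Manivel's Theorem A and with exterior powers added), but there is a concrete error in the one genuinely new ingredient, namely the resolution of the exterior-power factors, and this error is exactly what prevents the stated constants from coming out. For a short exact sequence $0\to A\to B\to C\to 0$, the Koszul-type resolution of $\Lambda^{m}C$ is
$$0\to S^{m}A\to S^{m-1}A\otimes \Lambda^{1}B\to\cdots\to S^{1}A\otimes\Lambda^{m-1}B\to\Lambda^{m}B\to\Lambda^{m}C\to 0,$$
with \emph{symmetric} powers of the sub-bundle, not the complex with terms $\Lambda^{t}N^{*}\otimes\Lambda^{m_i-t}\tOm_{\bP^N_{|X}}$ that you write down (that complex is not exact in general; you may be conflating the resolution of the quotient with the filtration of $\Lambda^{m}B$ whose graded pieces are $\Lambda^{i}A\otimes\Lambda^{m-i}C$). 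This is not a cosmetic slip. In Manivel's Theorem A each exterior factor $\Lambda^{j_s}E$ costs $e-j_s$ in the vanishing threshold, while each symmetric factor costs nothing but raises the required power of $\det E$ by one. With the correct resolution, the $r$ exterior factors of $\tOm_X$ become, after Serre duality, $r$ symmetric powers of the nef bundle $N(-1)$: this is precisely why the codimension condition is $j+p<n-kc$, independent of $r$, and why the hypothesis involves $(\det(N(-1)))^{p+r}=K_X^{p+r}\otimes\cO_X((n+1)(p+r))$ (using $\det(N(-1))=K_X\otimes\cO_X(n+1)$), i.e.\ the ampleness of $K_X^{-q-p-r}\otimes\cO_{X}(\ell+m-(n+1)(p+r)-a)$. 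With your resolution, even granting exactness, the extra exterior powers of $N^{*}$ would force the weaker condition $j+p<n-(k+r)c$ and would change the determinant twist to $(\det(N(-1)))^{p}$, so the theorem as stated cannot be recovered from your setup.

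A secondary, related gap is in part (2): the correction term is $\min\{j,k+m\}$ with $m=\sum m_i$, not something counting the $k+r$ factors. The reason is that the Euler sequence gives $0\to\Lambda^{m_i}\Om_X\to\Lambda^{m_i}\tOm_X\to\Lambda^{m_i-1}\Om_X\to 0$ (the quotient $\cO_X$ has rank one), and splicing these yields a resolution of $\Lambda^{m_i}\Om_X$ of length $m_i$, namely $0\to\Lambda^{m_i}\Om_X\to\Lambda^{m_i}\tOm_X\to\Lambda^{m_i-1}\tOm_X\to\cdots\to\cO_X\to 0$. Hence each exterior factor can shift the cohomological index by up to $m_i$ in Lemma \ref{lemmegauche}, whereas each symmetric factor shifts it by at most $1$; the total shift is bounded by both $j$ and $k+m$. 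Your write-up defers this as generic bookkeeping, but it is the entire source of the $m$ in $\min\{j,k+m\}$ and must be made explicit for the proof to close.
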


\emph{Acknowledgment.} We would like thank C. Mourougane for the discussions we had concerning this work and for the time he spent reading and commenting earlier versions of this paper.

\bibliographystyle{alpha}      
\bibliography{biblio.bib}   

\end{document}